\def\theequation{\thesection.\arabic{equation}}
\newcommand{\eqnum}{\refstepcounter{equation}\textup{\tagform@{\theequation}}}
\newcounter{copy}
\renewcommand{\thecopy}{\ifnum0=\c@section\arabic{copy}\else\thesection.\arabic{copy}'\fi}
\theoremstyle{definition}
\newtheorem{defn}[equation]{Definition}
\newtheorem{para}[equation]{}
\theoremstyle{plain}
\newtheorem{thm}[equation]{Theorem}
\newtheorem{lem}[equation]{Lemma}
\newtheorem{cor}[equation]{Corollary}
\theoremstyle{remark}
\newtheorem{rmk}[equation]{Remark}
\crefname{defn}{Definition}{Definitions}
\crefname{notn}{Notation}{Notations}
\crefname{assmp}{Assumption}{Assumptions}
\crefname{thm}{Theorem}{Theorems}
\crefname{prp}{Proposition}{Propositions}
\crefname{lem}{Lemma}{Lemmas}
\crefname{cor}{Corollary}{Corollaries}
\crefname{conj}{Conjecture}{Conjectures}
\crefname{rmk}{Remark}{Remarks}
\crefname{exmp}{Example}{Examples}
\crefname{section}{Section}{Sections}
\crefname{subsection}{Subsection}{Subsections}
\crefname{para}{}{}
\crefname{appendix}{Appendix}{Appendices}
\crefname{subappendix}{Appendix}{Appendices}
\crefname{table}{Table}{Tables}
\newcommand{\bB}{\mathbb{B}}
\newcommand{\bC}{\mathbb{C}}
\newcommand{\bK}{\mathbb{K}}
\newcommand{\bN}{\mathbb{N}}
\newcommand{\bR}{\mathbb{R}}
\newcommand{\cV}{\mathcal{V}}
\newcommand{\vol}{\mathrm{vol}}
\newcommand{\Cl}{\text{\it C$\ell$}}
\DeclareMathOperator{\KO}{\mathrm{KO}}
\DeclareMathOperator{\Ind}{\mathrm{Ind}}
\DeclareMathOperator{\supp}{\mathrm{supp}}
\newcommand{\Sgn}{\mathrm{Sgn}}
\newcommand{\sys}{\mathrm{sys}}
\author{Yosuke Kubota}
\address{Department of Mathematical Sciences, Shinshu University\\ 3-1-1 Asahi, Matsumoto, Nagano, 390-8621, Japan\\ and \\ RIKEN iTHEMS \\ 2-1 Hirosawa, Wako, Saitama, 351-0198, Japan}
\email{ykubota@shinshu-u.ac.jp}
\title{Band width and the Rosenberg index}
\date{\today}
\begin{document}
\maketitle
\begin{abstract}
A Riemannian manifold is said to have infinite $\mathcal{KO}$-width if it admits an isometric immersion of an arbitrarily wide Riemannian band whose inward boundary has non-trivial higher index.  
In this paper we prove that if a closed spin manifold has inifinite $\mathcal{KO}$-width, then its Rosenberg index does not vanish. This gives a positive answer to a conjecture by R.~Zeidler. 
We also prove its `multi-dimensional' generalization; if a closed spin manifold admit an isometric immersion of an arbitrarily wide cube-like domain whose lowest dimensional corner has non-trivial higher index, then the Rosenberg index of $M$ does not vanish.  
\end{abstract}


\section{Introduction}\label{section:1}
The existence of a positive scalar curvature (psc) metric on a given manifold has been a fundamental problem in high-dimensional differential topology. 
An effective approach is the Dirac operator method, in which the Schr\"{o}dinger--Lichnerowicz theorem reduces the problem to the invertibility of the Dirac operator.
When the manifold is not compact, the invertibility of the Dirac operator is obstructed by the higher index, a generalization of the Fredholm index defined by using C*-algebra K-theory and coarse geometry \cites{roeLecturesCoarseGeometry2003,willett_yu_2020}. 
When one consider the universal covering of a closed manifold, the higher index of the Dirac operator is called the Rosenberg index \cites{rosenbergAstAlgebrasPositive1983,rosenbergAstAlgebrasPositive1986,rosenbergAstAlgebrasPositive1986a} and is known to be a powerful obstruction to a psc metric. 
Indeed, the Rosenberg--Stolz theorem \cites{rosenbergStableVersionGromovLawson1995,stolzManifoldsPositiveScalar2002} states that the Rosenberg index is a complete obstruction to positive scalar curvature in a stable sense, under the assumption of the Baum--Connes injectivity. 
More precisely, the vanishing of the Rosenberg index is equivalent to the existence of a psc metric after taking the direct product with sufficiently many copies of the Bott manifold (an $8$-dimensional closed spin manifold with $\Sgn(B)=0$ and $\hat{A}(B)=1$). 
On the other hand, Schick \cite{schickCounterexampleUnstableGromovLawsonRosenberg1998} constructed a closed spin manifold in dimensions $5$, $6$, $7$ which does not admit any psc metric but its Rosenberg index vanishes, by using the Schoen--Yau minimal surface method \cite{schoenStructureManifoldsPositive1979}. 
This leads us to explore a psc obstruction beyond the Rosenberg index. 
A guideline is Schick's meta-conjecture~\cite{schickTopologyPositiveScalar2014}*{Conjecture 1.5}, stating that any topological obstruction to positive scalar curvature coming from the Dirac operator is dominated by the Rosenberg index. 
For example, the psc obstructions given by the Rosenberg index of certain submanifolds of codimension 1 \cites{zeidlerIndexObstructionPositive2017,kubotaRelativeMishchenkoFomenkoHigher2020} and codimension $2$ \cites{hankeCodimensionTwoIndex2015,kubotaRelativeMishchenkoFomenkoHigher2019,kubotaGromovLawsonCodimensionObstruction2020,kubotaCodimensionTransferHigher2021} provide evidences to this conjecture.

Recently, a series of papers by Gromov \cites{gromovDozenProblemsQuestions2018,gromovMetricInequalitiesScalar2018,gromovFourLecturesScalar2019} shed new lights on this problem. 
One of the remarkable ideas proposed in these papers is the notion of a band and its width. 
A (proper) compact \emph{Riemannian band} $V$ is a compact Riemannian manifold with inward and outward boundaries $\partial _\pm V$. 
The distance of $\partial_+V$ and $\partial_-V$ is called its \emph{width}.  
Gromov proved that, if a compact Riemannian band $V$ is endowed with a psc metric but $\partial _+ V$ does not admit any psc metric due to the minimal surface method, then the width of $V$ is bounded by a constant depending on the infimum of the scalar curvature and the dimension.
Following this line, in \cites{zeidlerBandWidthEstimates2020,zeidlerWidthLargenessIndex2020,cecchiniLongNeckPrinciple2020}, Zeidler and Cecchini proved the same band width inequality when the inward boundary $\partial_+ \widetilde{V}$ of the universal covering $\widetilde{V}$ has non-trivial higher index. 
Furthermore, another approach to this inequality based on the quantitative K-theory is developed by Guo--Xie--Yu~\cite{guoQuantitativeKtheoryPositive2020}.

This band width inequality has a qualitative application to the existence of a psc. It is considered by Gromov \cite{gromovMetricInequalitiesScalar2018}*{Section 3,4} and Zeidler \cite{zeidlerBandWidthEstimates2020}*{Section 4}, the latter of which is the main subject of this paper. The following definition of the notion of $\mathcal{KO}$-width looks a little different, but equivalent, to the original definition by Zeidler. 
\begin{defn}[{\cite{zeidlerBandWidthEstimates2020}*{Definition 4.3}}]
A compact Riemannian band is said to be in the class $\mathcal{KO}$ if it is equipped with a spin structure and the index of the higher index $\Ind _{\pi_1(V)}(\slashed{D}_{\partial _+ \widetilde{V}})$ does not vanish, where $\partial _+\widetilde{V}$ denotes the inward boundary of the universal covering of $V$. For a closed manifold $M$, its $\mathcal{KO}$-width $\mathrm{width}_{\mathcal{KO}} (M,g)$ is the supremum of the width of bands in the class $\mathcal{KO}$ which is isometrically immersed to $M$. 
\end{defn}
Note that the infiniteness of the $\mathcal{KO}$-width depends only on the diffeomorphism class of $M$, i.e., is independent of the metric on it.  
The band width inequality implies that a closed manifold with infinite $\mathcal{KO}$-width does not admit a psc metric.
Since the infiniteness of the $\mathcal{KO}$-width is stable under the direct product with the Bott manifold, the Rosenberg--Stolz theorem shows the non-vanishiing of the Rosenberg index of $M$, if we assume that $\pi_1(M)$ satisfies the Baum--Connes injectivity. 
Zeidler conjectured that this non-vanishing still holds without assuming the Baum--Connes injectivity \cite{zeidlerBandWidthEstimates2020}*{Conjecture 4.12}, following the line of Schick's meta-conjecture. 
The aim of this paper is to give a positive answer to this conjecture. 

In this paper we work in a little more general setting. We need not assume that the target manifold $M$ is closed. Instead, we assume that the universal covering $\widetilde{M}$ of $M$ has a well-behaved `uniform' topology.  For $x \in X$ and $R>0$, let $B_R(x)$ denote the ball with the center $x$ and the radius $R$.  
\begin{defn}\label{defn:unif1conn}
A metric space $X$ is said to be \emph{uniformly $1$-connected} if there is an increasing function $\varphi \colon \bR_{>0} \to \bR_{>0}$ with $\varphi(t) \to \infty$ as $t \to \infty$ such that any loop in $B_R(x)$ is trivial in $B_{\varphi(R)}(x)$.  
\end{defn}
Here we extend the notion of $\mathcal{KO}$-width to complete Riemannian manifolds. Then its infiniteness depends only on the diffeomorphism class of $M$ and the coarse equivalence class of the metric. Now we state the first main theorem of this paper.
\begin{thm}\label{thm:main}
Let $(M,g)$ be a complete Riemannian spin manifold whose universal covering $\widetilde{M}$ is uniformly $1$-connected. 
If $M$ has infinite $\mathcal{KO}$-width, then the maximal equivariant coarse index $\Ind_{\Gamma} (\slashed{D}_{\widetilde{M}})$ of the Dirac operator $\slashed{D}_{\widetilde{M}}$ on $\widetilde{M}$ does not vanish.  
\end{thm}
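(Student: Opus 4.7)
The plan is to argue by contrapositive, in the spirit of the quantitative K-theory / propagation arguments of \cite{guoQuantitativeKtheoryPositive2020}. Assume $\Ind_\Gamma(\slashed{D}_{\widetilde{M}})=0$. By standard coarse index theory, the vanishing of this class in $K_*(C^*(\widetilde{M})^\Gamma)$ is equivalent to the existence of a $\Gamma$-equivariant, finite-propagation parametrix $Q$ for $\slashed{D}_{\widetilde{M}}$ modulo locally compact operators, with some propagation radius $R_0>0$. The target is to show that the existence of $Q$ forces a uniform upper bound on the widths of all bands in class $\mathcal{KO}$ isometrically immersible into $M$, contradicting the hypothesis of infinite $\mathcal{KO}$-width.

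Fix a compact Riemannian band $V$ in class $\mathcal{KO}$ with isometric immersion $\iota\colon V\to M$ of width $w$, and lift to an equivariant isometric immersion $\widetilde\iota\colon \widetilde V\to\widetilde M$ compatible with a homomorphism $\phi\colon \pi_1(V)\to\Gamma$ (such a lift exists because $\widetilde M$ is simply connected). The central step is to transfer $Q$ to a $\pi_1(V)$-equivariant, finite-propagation parametrix $Q_V$ for the Dirac operator on $\widetilde V$ that is effective on a one-sided collar of $\partial_+\widetilde V$ of depth strictly larger than $R_0$. One does this by covering the image of $\widetilde\iota$ by open sets on which the immersion restricts to a diffeomorphism, pulling $Q$ back on each piece, and gluing with a partition of unity; the uniform $1$-connectedness of $\widetilde M$ is invoked here to guarantee that the local embeddings fit together coherently at the scale $R_0$, so that the glued operator is a genuine parametrix once $w\gg R_0$.

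Given such a $Q_V$, a codimension-one / relative higher index argument of the type developed in \cite{zeidlerBandWidthEstimates2020,cecchiniLongNeckPrinciple2020,kubotaRelativeMishchenkoFomenkoHigher2020} should convert the one-sided parametrix into the vanishing of the boundary higher index $\Ind_{\pi_1(V)}(\slashed{D}_{\partial_+\widetilde V})$ in $K_*(C^*\pi_1(V))$, contradicting $V\in\mathcal{KO}$. This produces a universal upper bound $w_0=w_0(R_0)$ on the widths of bands in class $\mathcal{KO}$ immersible in $M$, contradicting infinite $\mathcal{KO}$-width and hence forcing $\Ind_\Gamma(\slashed{D}_{\widetilde{M}})\neq 0$.

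The main obstacle is the pullback-and-gluing construction of $Q_V$ from the ambient parametrix $Q$. Two delicate points stand out: (a) the map $\widetilde\iota$ is only an immersion, not a coarse embedding, so the uniform $1$-connectedness of $\widetilde M$ must be leveraged to arrange consistent local trivializations of the image at scale $R_0$ and to control how preimages under $\widetilde\iota$ meet; and (b) the entire construction must be carried out in the maximal picture of the group C*-algebra in order to bypass the Baum--Connes injectivity assumption and reach the genuine Rosenberg index rather than its reduced variant. A subsidiary but non-trivial issue is that the codimension-one passage in the last step must be sharp enough to annihilate the specific nontrivial class on $\partial_+\widetilde V$ provided by the class $\mathcal{KO}$ assumption, rather than merely some related weaker invariant.
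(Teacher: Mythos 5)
The pivotal gap is your very first step. Vanishing of $\Ind_\Gamma(\slashed{D}_{\widetilde{M}})$ takes place in $\KO_*(C^*(\widetilde{M})^\Gamma)$, the K-theory of a completion: what it gives is a lift of the class of $\chi(\slashed{D}_{\widetilde{M}})$ along $D^*(\widetilde{M})^\Gamma \to Q^*(\widetilde{M})^\Gamma$ only after stabilization and homotopy, and neither the lift nor the homotopy carries any propagation bound. The claim that vanishing is ``equivalent to the existence of a $\Gamma$-equivariant finite-propagation parametrix with some radius $R_0$'' is not standard coarse index theory; extracting propagation control from K-theoretic vanishing is exactly the content of quantitative (controlled) K-theory, and even in the Guo--Xie--Yu framework the output is controlled K-theory data rather than a parametrix for $\slashed{D}$ in your sense. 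The final step inherits the same defect: the band-width theorems you cite assume positive scalar curvature or a genuine spectral gap on the band, not a one-sided K-theoretic parametrix, so ``should convert \dots into the vanishing of $\Ind_{\pi_1(V)}(\slashed{D}_{\partial_+\widetilde{V}})$'' is an unproven quantitative statement essentially as hard as the theorem itself. As written, the argument has black boxes at both ends.

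The transfer step is also missing its mechanism, and this is where the paper's route is instructive. The paper takes no contrapositive: it fixes a sequence of bands $V_n$ of width $>n$, proves a relative systole estimate (any loop of length $<2C_R$ lying $R$-deep inside the immersed band is null-homotopic in the image of $\widetilde{V}_n$ --- this is the precise way uniform $1$-connectedness enters), and uses it to lift kernels of finite-propagation operators uniquely and $\pi_n$-equivariantly away from the boundary. This yields a $\ast$-homomorphism $s\colon C^*(\widetilde{M})^\Gamma \to C^*(\widetilde{\mathbf{V}})^{\boldsymbol{\pi}}/C^*(\partial\widetilde{\mathbf{V}}\subset\widetilde{\mathbf{V}})^{\boldsymbol{\pi}}$ into an asymptotic algebra of all the bands at once, extended to the completion by maximality of the norm (which is exactly how Baum--Connes injectivity is bypassed), and then a commuting K-theory diagram sends $\Ind_\Gamma(\slashed{D}_{\widetilde{M}})$ to the sequence $(\Ind_{\pi_n}(\slashed{D}_{\partial_+\widetilde{V}_n}))_n$ in a product modulo direct sum, which is nonzero by hypothesis; no inverse or parametrix is ever needed. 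In your sketch, covering the image by sets on which the immersion is a diffeomorphism and gluing with a partition of unity does not by itself guarantee that the different local branches of the inverse of $\widetilde{\iota}$ agree at scale $R_0$, nor that the glued operator is $\pi_1(V)$-equivariant; you need the systole statement (or an equivalent) even to define the transfer, and the transfer can only be multiplicative modulo operators supported near $\partial\widetilde{V}$. If you wish to keep the contrapositive strategy, the honest route is a full quantitative K-theory argument adapted to the maximal completion, which is considerably heavier than the paper's direct push-forward.
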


\begin{cor}
Let $M$ be a closed spin manifold. 
If $M$ has infinite $\mathcal{KO}$-width, then the Rosenberg index $\alpha _\Gamma(M) $ does not vanish.  
\end{cor}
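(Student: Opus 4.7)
The corollary is essentially a direct specialisation of Theorem~\ref{thm:main} to the closed case. Writing $\Gamma := \pi_1(M)$, the plan is to verify the two running hypotheses of that theorem: (i) the universal cover $\widetilde{M}$ is uniformly $1$-connected in the sense of \cref{defn:unif1conn}, and (ii) the maximal equivariant coarse index $\Ind_\Gamma(\slashed{D}_{\widetilde{M}})$ coincides with the Rosenberg index $\alpha_\Gamma(M)$.

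For (i), the group $\Gamma$ acts cocompactly, freely, and isometrically on $\widetilde{M}$, and $\widetilde{M}$ is simply connected. I would fix a compact fundamental domain $K$, lift a finite smooth triangulation of $M$ to a $\Gamma$-invariant triangulation of $\widetilde{M}$ of bounded geometry, and deduce the uniform $1$-connectedness from simplicial approximation together with the Dehn-type filling function associated to a finite presentation of $\Gamma$. By $\Gamma$-equivariance of the metric it suffices to produce a single $\varphi$ for basepoints in $K$: a continuous loop in $B_R(x)$ admits a simplicial approximation in the $1$-skeleton contained in a slightly larger ball, and finite presentability of $\Gamma$ provides a $2$-disc filling whose diameter is bounded by an increasing function $\varphi(R)$ of $R$ alone.

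For (ii), cocompactness of the free isometric $\Gamma$-action yields a Morita equivalence between the maximal equivariant Roe algebra $C^*_{\max}(\widetilde{M})^\Gamma$ and the maximal group C$^*$-algebra $C^*_{\max}(\Gamma)$; under the induced isomorphism on $\K$-theory, the equivariant coarse index of $\slashed{D}_{\widetilde{M}}$ corresponds to the Mishchenko--Fomenko higher index of $\slashed{D}_M$ twisted by the flat Mishchenko bundle, which is by definition $\alpha_\Gamma(M)$. Combining (i) and (ii), \cref{thm:main} then yields $\alpha_\Gamma(M) \neq 0$. The reduction is entirely formal given \cref{thm:main}; the only non-trivial input is the well-known uniform $1$-connectedness of $\widetilde{M}$ in (i), which itself is standard. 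The genuine content of the result lies in \cref{thm:main} rather than in its corollary.
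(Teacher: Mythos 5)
Your reduction is exactly the paper's: the corollary follows by specializing \cref{thm:main}, since \cref{lem:universal_covering} supplies the uniform $1$-connectedness of $\widetilde{M}$ for closed $M$, and the identification $\alpha_\Gamma(M)=\Ind_\Gamma([\slashed{D}_{\widetilde{M}}])$ via the Morita equivalence between $C^*(\widetilde{M})^\Gamma$ and the maximal group C*-algebra $C^*\Gamma$ is precisely the remark made in \cref{section:main}. Your alternative justification of uniform $1$-connectedness (triangulation, simplicial approximation, Dehn filling) is sound in spirit — just note that a loop in $B_R(x)$ may have length far exceeding $R$, so one should first cone it off to the center to reduce to loops of length about $2R$ before invoking a filling-diameter bound — whereas the paper's \cref{lem:universal_covering} argues instead via the nerve of a finite cover, simple connectedness of $\widetilde{M}$, and cocompactness.
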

We remark that this corollary reproves \cite{kubotaGromovLawsonCodimensionObstruction2020}*{Theorem 1.1} as a special case, as is pointed out by Zeidler in \cite{zeidlerBandWidthEstimates2020}*{Example 4.9}. 

There are two ingredients of the proof. One is the asymptotic method in C*-algebra K-theory. 
The asymptotic C*-algebras, typically the quotient of the direct product of a sequence of C*-algebras by the direct sum, has been exploited in many researches of higher index theory such as Hanke--Schick~\cites{hankeEnlargeabilityIndexTheory2006,hankeEnlargeabilityIndexTheory2007}, Gong--Wang--Yu~\cite{gongGeometrizationStrongNovikov2008}, and so on.
In this paper, the C*-algebra of this kind leads us to a qualitative treatment of the higher index of a sequence $\{ V_n\}_{n \in \bN}$ of Riemannian bands getting wider as $n \to \infty$, which suits our purpose (although the quantitative estimate is the highlight of this new research direction after Gromov). 

Another ingredient is an estimate of the (relative) systole of Riemannian bands immersed to $M$. 
What makes the problem seem difficult is that the immersion $V \to M$ does not induce the injection of fundamental groups in general.
However, we show in \cref{lem:systole_propagation} that the length of a non-trivial loop in $V$, which is apart from the boundary and trivial in $M$, is bounded below by a constant depending on the width of $V$.
This enables us to lift an operator on $\widetilde{M}$ to the universal covering $\widetilde{V}$ of $V$ `modulo the boundary', which is a variant of the lifting lemma developed in \cite{kubotaCodimensionTransferHigher2021}.

We also discuss a generalization of \cref{thm:main}, in which we consider the multiwidth of cube-like domains immersed to $M$ instead of the width of bands. 
We say that a $\square^m$-domain is a Riemannian manifold with corner $V$ equipped with a well-behaved corner-preserving smooth map $f \colon V \to [-1,1]^m$ (more precisely, see \cref{defn:square}). We write $\partial _{j,\pm}V$ for the inverse image $f^{-1}(\{ x_j = \pm 1 \})$.
The multiwidth of $V$ is defined by the minimum of the distances of $\partial_{j,+}V$ and $\partial_{j,-}V$. 
In recent researches, a generalization of the band width inequality to such domains has been considered; 
if $V$ is a $\square^m$-domain equipped with a psc metric but its lowest dimensional corner $V_\pitchfork$ (more precisely, a transverse intersection of $m$ hypersurfaces each of which separates $\partial _{j,+}V$ with $\partial _{j,-}V$) does not admit a psc metric due to the Dirac operator or minimal surface methods, 
then its multiwidth is bounded by a constant depending on the infimum of the scalar curvature and the dimension. 
This inequality, called the $\square^{n-m}$-theorem or the $\square^{n-m}$-inequality, is proposed and proved for $\dim V \leq 8$ by Gromov \cite{gromovFourLecturesScalar2019}*{p.260}, and 
generalized to manifolds with all dimensions
by Wang--Xie--Yu and Xie \cites{wangProofGromovCube2021,xieQuantitativeRelativeIndex2021} in the case that the psc metric on $V_\pitchfork$ is obstructed by its Rosenberg index. 
A qualitative consequence of this inequality is that if a closed spin manifold $M$ has infinite $\mathcal{KO}_m$-multiwidth (\cref{defn:squareKO}), then $M$ does not admit any psc metric.
The second main theorem of this paper is to dominate this obstruction to positive scalar curvature by the Rosenberg index.
\begin{thm}\label{thm:main2}
Let $(M,g)$ be a complete Riemannian spin manifold whose universal covering $\widetilde{M}$ is uniformly $1$-connected. If $M$ has infinite $\mathcal{KO}_m$-multiwidth (\cref{defn:squareKO}), then the maximal coarse index $\Ind_\Gamma(\slashed{D}_{\widetilde{M}}) \in \KO_d(C^*(\widetilde{M})^\Gamma)$ of the Dirac operator on $\widetilde{M}$ does not vanish. 
\end{thm}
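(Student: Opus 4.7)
The proof will follow the asymptotic K-theoretic framework developed for \cref{thm:main}, with bands replaced by $\square^m$-domains. Suppose we are given a sequence of isometric immersions $\iota_n \colon V_n \to M$ of $\square^m$-domains whose multiwidths tend to infinity and whose lowest corners satisfy $\Ind_{\pi_1(V_n)}(\slashed{D}_{\widetilde{V}_{n,\pitchfork}}) \neq 0$. I would assemble the equivariant relative Roe algebras of the pairs $(\widetilde V_n, \partial \widetilde V_n)$ into an asymptotic quotient C*-algebra of the form $\prod_n / \bigoplus_n$, and aim to produce a nonzero KO-class in its K-theory that is simultaneously (i) witnessed by the corner indices of $V_{n,\pitchfork}$, and (ii) hit by the image of $\Ind_\Gamma(\slashed{D}_{\widetilde M})$ under a lifting homomorphism from $C^*(\widetilde M)^\Gamma$.

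On the lifting side, the systole estimate of \cref{lem:systole_propagation} extends essentially verbatim, since the multiwidth controls the distance from the interior of $V_n$ to every face $\partial_{j,\pm}V_n$: a loop supported away from the full boundary $\partial V_n$ and null-homotopic in $M$ must have length of order the multiwidth. This allows equivariant finite-propagation operators on $\widetilde M$ to be lifted to operators on $\widetilde V_n$ that vanish near each face of $\partial \widetilde V_n$, producing a $\ast$-homomorphism from $C^*(\widetilde M)^\Gamma$ into the asymptotic relative Roe algebra.

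On the corner side, I would realize $\Ind_{\pi_1(V_n)}(\slashed{D}_{\widetilde V_{n,\pitchfork}})$ as a K-theory element of the relative Roe algebra of $(\widetilde V_n,\partial\widetilde V_n)$ by invoking the $\square^{n-m}$-index construction of Wang--Xie--Yu and Xie, i.e.\ by pairing the corner Dirac operator with the relative fundamental class of the cube $[-1,1]^m$. Passing to the asymptotic quotient, these classes assemble into a single K-class, and a product-formula computation---combined with the observation that near the boundary the Dirac operator on $\widetilde M$ can be deformed at the cost of terms supported in a neighborhood that vanishes in the asymptotic quotient---should identify the pullback of this class under the lifting homomorphism with $\Ind_\Gamma(\slashed{D}_{\widetilde M})$ up to the Bott generator of $[-1,1]^m$. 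Non-vanishing of the corner indices then forces $\Ind_\Gamma(\slashed{D}_{\widetilde M}) \neq 0$.

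The main obstacle will be executing the $m$-fold transfer coherently within the asymptotic framework. A tempting alternative is to iterate the single-band argument of \cref{thm:main} along the $m$ coordinate directions, but at each stage the relevant inward face is a $\square^{m-1}$-domain whose own deepest corner is only a codimension-one slice of $V_{n,\pitchfork}$, not the full corner; one must therefore track $m$ simultaneously commuting transfer maps and verify that their composition captures precisely the corner index of $V_{n,\pitchfork}$. Ensuring that the resulting K-theoretic pairing recovers $\Ind_\Gamma(\slashed{D}_{\widetilde M})$---and not merely some nonzero but unrelated element of $\KO_d(C^*(\widetilde M)^\Gamma)$---is where the bulk of the work will lie.
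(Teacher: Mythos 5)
Your reuse of the band-case machinery (asymptotic relative Roe algebras, the systole estimate, the lifting homomorphism $s$) is exactly right, and that part matches the paper; the only caveat is that \cref{lem:systole_propagation} bounds the length of a loop at distance $\geq R$ from $\partial V_n$ that is nontrivial in the image $\widetilde{U}_n$ by $2C_R$, a quantity governed by the uniform $1$-connectedness function, not by the multiwidth itself. The genuine gap is in the codimension-$m$ step, which is where the new content of \cref{thm:main2} lies. Your preferred route --- realizing $\Ind_{\pi_n}(\slashed{D}_{\widetilde{V}_{n,\pitchfork}})$ by pairing with a relative fundamental class of $[-1,1]^m$ \`{a} la Wang--Xie--Yu/Xie and invoking a product formula ``up to the Bott generator'' --- is left entirely unexecuted: you would have to construct that pairing so that it descends both to the quotient by $C^*(\partial \widetilde{\mathbf{V}} \subset \widetilde{\mathbf{V}})^{\boldsymbol{\pi}}$ and to the $\prod/\bigoplus$ asymptotic quotient, and then prove its compatibility with $s_*$; the word ``should'' carries all the weight. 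The final worry is also logically inverted: there is no need for the pairing to ``recover'' $\Ind_\Gamma(\slashed{D}_{\widetilde{M}})$. It suffices to exhibit a homomorphism out of $\KO_d(C^*(\widetilde{M})^\Gamma)$ whose value on $\Ind_\Gamma(\slashed{D}_{\widetilde{M}})$ is the tuple $(\Ind_{\pi_n}(\slashed{D}_{\widetilde{V}_{n,\pitchfork}}))_n^\flat$, which is nonzero by hypothesis.

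Moreover, the ``tempting alternative'' you set aside is precisely what the paper does, and your reason for discarding it rests on a misconception. The face $W^1_n=\partial_{1,+}V_n$ is a $\square^{m-1}$-domain whose deepest corner is $\partial_{1,+}V_n\cap\cdots\cap\partial_{m,+}V_n=V_{n,\pitchfork}$ --- the full corner, not a ``codimension-one slice'' of it --- so no $m$ simultaneously commuting transfer maps are required. One simply sets $W^k_n=\partial_{1,+}V_n\cap\cdots\cap\partial_{k,+}V_n$ and iterates a single codimension-one boundary map: writing $\widetilde{\mathbf{Z}}$ for the closure of $\partial\widetilde{\mathbf{V}}\setminus\widetilde{\mathbf{W}}$, one has $C^*(\partial\widetilde{\mathbf{V}})^{\boldsymbol{\pi}}/C^*(\widetilde{\mathbf{Z}}\subset\partial\widetilde{\mathbf{V}})^{\boldsymbol{\pi}}\cong C^*(\widetilde{\mathbf{W}})^{\boldsymbol{\pi}}/C^*(\partial\widetilde{\mathbf{W}}\subset\widetilde{\mathbf{W}})^{\boldsymbol{\pi}}$, and the resulting K-theory boundary map sends $\Ind_{\boldsymbol{\pi}}([\slashed{D}_{\widetilde{\mathbf{V}}}])$ to $\Ind_{\boldsymbol{\pi}}([\slashed{D}_{\widetilde{\mathbf{W}}}])$ by the ``boundary of Dirac is Dirac'' principle (\cref{lem:boundary}). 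Iterating this $m-1$ times lands on the band $\mathbf{W}^{m-1}$ whose inward boundary is $\mathbf{V}_\pitchfork$, after which the band-case argument of \cref{thm:main} (the $\flat$-boundary map \eqref{eq:boundary_flat} followed by $\phi_+$) concludes. Replacing your pairing sketch by this iteration would turn your outline into a complete proof.
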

\begin{cor}
Let $(M,g)$ be a closed Riemannian spin manifold. If $M$ has infinite $\mathcal{KO}_m$-width, then the Rosenberg index $\alpha_\Gamma(M)$ does not vanish. 
\end{cor}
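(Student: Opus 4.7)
The corollary follows from \cref{thm:main2}, so only two routine verifications are needed.

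First, I would check that for a closed Riemannian manifold $M$, the universal cover $\widetilde{M}$ is uniformly $1$-connected in the sense of \cref{defn:unif1conn}. Let $\Gamma = \pi_1(M)$. Since $M$ is compact, the deck action of $\Gamma$ on $\widetilde{M}$ is free, isometric, and cocompact, and $\widetilde{M}$ is simply connected with bounded geometry. Fix a finite triangulation of $M$ and lift it $\Gamma$-equivariantly to $\widetilde{M}$. Any continuous loop contained in a ball $B_R(x)$ is homotopic, within a bounded neighbourhood, to a combinatorial loop in the $1$-skeleton still lying in $B_{R+c}(x)$ for some universal constant $c$. Because $\Gamma$ is finitely presented and $\widetilde{M}$ is simply connected, every such combinatorial loop admits a combinatorial nullhomotopy whose diameter depends only on the combinatorial length of the loop (by $\Gamma$-translation plus the Dehn function of the induced presentation, which is well defined as a monotone function regardless of its computability). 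This yields an increasing, unbounded function $\varphi$ of the required form.

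Second, I need to identify the maximal equivariant coarse index appearing in \cref{thm:main2} with the Rosenberg index. The free, cocompact, isometric $\Gamma$-action on $\widetilde{M}$ gives the standard Morita equivalence
\[
C^*(\widetilde{M})^\Gamma \sim C^*_{\max}(\Gamma) \otimes \mathcal{K},
\]
and the induced isomorphism $\KO_d(C^*(\widetilde{M})^\Gamma) \cong \KO_d(C^*_{\max}(\Gamma))$ sends $\Ind_{\Gamma}(\slashed{D}_{\widetilde{M}})$ to $\alpha_\Gamma(M)$. This is the usual dictionary between the equivariant coarse higher index and the Mishchenko--Fomenko assembly. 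The conclusion of \cref{thm:main2} thus becomes the non-vanishing of $\alpha_\Gamma(M)$.

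Neither step presents a genuine obstacle: the real work has already been carried out in \cref{thm:main2}, and what remains here is packaging a standard closed-manifold hypothesis and a standard identification of indices. (We also implicitly read ``infinite $\mathcal{KO}_m$-width'' in the statement as the $\mathcal{KO}_m$-multiwidth of \cref{thm:main2}.)
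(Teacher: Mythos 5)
Your overall route is exactly the paper's (implicit) proof of this corollary: apply \cref{thm:main2}, using that the universal covering of a closed Riemannian manifold is uniformly $1$-connected, and then identify the maximal equivariant coarse index $\Ind_\Gamma(\slashed{D}_{\widetilde{M}})$ with the Rosenberg index $\alpha_\Gamma(M)$ via the Morita equivalence of $C^*(\widetilde{M})^\Gamma$ with the maximal group C*-algebra $C^*\Gamma$ --- the latter identification is precisely the remark the paper makes just before the proof of \cref{thm:main}, and your reading of ``$\mathcal{KO}_m$-width'' as the $\mathcal{KO}_m$-multiwidth is the intended one.

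Two comments on your first verification. It is literally \cref{lem:universal_covering} of the paper, so you may simply cite it rather than reprove it. If you keep your own Dehn-function sketch, note that as written it has a gap: a Dehn (or filling-diameter) function controls the nullhomotopy in terms of the \emph{combinatorial length} of the loop, but a loop contained in $B_R(x)$ can have arbitrarily large length, so ``diameter depending only on the combinatorial length'' does not by itself yield a function $\varphi$ of $R$ as required by \cref{defn:unif1conn}. The standard repair is to first rewrite the loop, up to a homotopy taking place in $B_{R+c}(x)$, as a product of lassos of length $O(R)$ (connect each vertex of the combinatorial loop to a fixed basepoint by a path of controlled length inside the ball) and only then apply the filling bound to these boundedly short loops; alternatively, the paper's argument in \cref{lem:universal_covering} (a finite ball cover, the finitely generated $\pi_1$ of its union, each generator bounding in the simply connected $\widetilde{M}$, then $\Gamma$-invariance and cocompactness) avoids the issue altogether. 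With that point either fixed or replaced by a citation, your proposal is correct and coincides with the paper's approach.
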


\subsection*{Acknowledgement}
The author would like to thank Shinichiroh Matsuo for a helpful comment.
This work was supported by RIKEN iTHEMS and JSPS KAKENHI Grant Numbers 19K14544, JPMJCR19T2, 17H06461.

\section{Proof of \cref{thm:main}}\label{section:main}
\subsection{Systole of Riemannian bands immersed to a uniformly $1$-connected manifold}
The first step for the proof is an observation on the systole, the minimum of the length of homotopically non-trivial loop, of a subspace of a uniformly $1$-connected manifold.

Let $M$ be a $d$-dimensional complete Riemannian spin manifold with infinite $\mathcal{KO}$-width. 
For each $n \in \bN$, pick a compact Riemannian band $V_n$ in the class $\mathcal{KO}$ whose width is greater than $n$, and an isometric immersion $f_n \colon V_n \to M$. 
Let $\pi_n:= \pi_1(V_n)$ and let $\Gamma _n \subset \Gamma$ denote the image of $\pi_n$ under the inclusion $V_n \to M$.
Then the map $f_n$ lifts to a codimension $0$ immersion $\tilde{f}_n \colon \widetilde{V}_n \to \widetilde{M}$, where $\widetilde{V}_n$ is the universal covering of $V_n$. 
In short, we write $\mathbf{V}$ and $\widetilde{\mathbf{V}}$ for the disjoint union $\bigsqcup_{n \in \bN} V_n$ and $\bigsqcup_{n \in \bN} \widetilde{V}_n $ respectively. 
For $R>0$, let $\widetilde{V}_{n,R}$ denote the open subset of $\widetilde{V}_n$ consisting of points $x \in \widetilde{V}_n$ such that $d(x, \partial \widetilde{V}) \geq R$. 

Let $\widetilde{U}_n$ and $\widetilde{U}_{n,R}$ denote the interior of $\tilde{f}(\widetilde{V}_n)$ and $\tilde{f}(\widetilde{V}_{n,R})$ respectively, which are non-empty open subsets of $\widetilde{M}$. 
For an inclusion $Y \subset X$ of length spaces, we call the infimum of the length of closed loops in $Y$ representing non-trivial homotopy class in $X$ the \emph{relative systole} of $Y \subset X$ and write $\sys (Y \subset X)$. This corresponds to the systole of $Y$ relative to $\pi_1(Y) \to \pi_1(X)$ in the standard terminology of systolic geometry (cf.~\cite{katzSystolicGeometryTopology2007}*{Definition 8.2.1}).  
\begin{lem}\label{lem:systole_propagation}
Assume that $\widetilde{M}$ is uniformly $1$-connected with respect to a function $\varphi \colon \bR_{>0} \to \bR_{>0}$ and let $C_R :=\inf \{ C>0\mid \varphi(C) \geq R\} $. 
Then we have  $\sys (\widetilde{U}_{n,R} \subset \widetilde{U}_n) \geq 2C_R$, where the relative systole is defined with respect to the Riemannian distance of $\widetilde{M}$. 
\end{lem}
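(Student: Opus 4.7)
The claim amounts to showing that every loop in $\widetilde{U}_{n,R}$ of length strictly less than $2C_R$ is null-homotopic in $\widetilde{U}_n$. My plan is to proceed in two steps: first, invoke the uniform $1$-connectivity of $\widetilde{M}$ to produce a null-homotopy of the loop inside a small metric ball of $\widetilde{M}$; second, use path lifting for the local isometry $\tilde{f}_n$ to show that this ball is in fact contained in $\widetilde{U}_n$, so that the null-homotopy takes place there.

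For the first step, fix such a loop $\gamma$ of length $L < 2C_R$ and a basepoint $p \in \gamma$. Every point of $\gamma$ can be joined to $p$ along $\gamma$ by a sub-arc of length at most $L/2 < C_R$, so I can choose $R' \in (L/2, C_R)$ and obtain $\gamma \subset B_{R'}(p) \subset \widetilde{M}$. Uniform $1$-connectivity then produces a contraction $H \colon D^2 \to B_{\varphi(R')}(p)$. Since $R' < C_R$ and $C_R$ is the infimum of $\{C > 0 \mid \varphi(C) \geq R\}$, it must be that $\varphi(R') < R$; writing $R'' := \varphi(R')$, the image of $H$ lies in $B_{R''}(p)$ with $R'' < R$.

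For the second step, I pick a preimage $\tilde{p} \in \widetilde{V}_{n,R}$ of $p$, so that $d(\tilde{p}, \partial \widetilde{V}_n) \geq R$ and the open ball $B_R(\tilde{p}) \subset \widetilde{V}_n$ is contained in $\mathrm{int}(\widetilde{V}_n)$. The key claim is that $B_R(p) \subset \widetilde{U}_n$. Given $q \in B_R(p)$, choose a path of length less than $R$ joining $p$ to $q$ in $\widetilde{M}$. Its local lift starting at $\tilde{p}$, which exists because $\tilde{f}_n$ is a local isometry, preserves length and hence remains inside $B_R(\tilde{p}) \subset \mathrm{int}(\widetilde{V}_n)$; in particular it never reaches $\partial \widetilde{V}_n$ and therefore extends to a global lift ending at a point of $\mathrm{int}(\widetilde{V}_n)$. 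Since $\tilde{f}_n$ restricted to $\mathrm{int}(\widetilde{V}_n)$ is an open map between boundaryless manifolds of equal dimension, its image is open in $\widetilde{M}$ and hence contained in $\widetilde{U}_n$. Combining the two steps, $H(D^2) \subset B_{R''}(p) \subset B_R(p) \subset \widetilde{U}_n$, which yields the desired null-homotopy.

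The main delicacy is that $\tilde{f}_n$ is not a covering map in general because of the presence of $\partial \widetilde{V}_n$; the condition $d(\tilde{p}, \partial \widetilde{V}_n) \geq R$ built into the definition of $\widetilde{V}_{n,R}$ is introduced precisely to guarantee that lifts of paths of length less than $R$ cannot escape the interior, and this is exactly what makes the path-lifting step succeed.
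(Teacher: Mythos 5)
Your proof is correct and follows essentially the same route as the paper: contract the short loop inside a small metric ball of $\widetilde{M}$ using uniform $1$-connectedness, then note that this ball is contained in $\widetilde{U}_n$ because the basepoint admits a lift lying $R$-deep in $\widetilde{V}_n$. The only difference is one of detail: the paper simply asserts the inclusion $B_R(x) \subset \widetilde{U}_n$, whereas you justify it by the path-lifting argument for the local isometry $\tilde{f}_n$, and you also treat the radius bookkeeping (choosing $R' < C_R$ so that $\varphi(R') < R$) a bit more carefully than the paper's shorthand $B_{\varphi(C_R)}(x) \subset B_R(x)$.
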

\begin{proof}
Let $\ell \colon S^1 \to \widetilde{U}_{n,R}$ be a closed loop with $\mathrm{length}(\ell) \leq 2C_R$. Let $x:=\ell(0)$. 
Note that $\ell$ is contained in the open ball $B_{C_R}(x)$.
By the assumption of uniform $1$-connectedness of $\widetilde{M}$, this $\ell$ is null-homotopic in $B_{\varphi(C_R)}(x) \subset B_R(x)$. Since $x \in \widetilde{U}_{n,R}$, we have $ B_{R}(x) \subset \widetilde{U}_{n}$. Hence we obtain that $[\ell] \in \pi_1(\widetilde{U}_{n})$ is trivial. This shows that $\mathrm{sys}(\widetilde{U}_{n,R} \subset \widetilde{U}_{n}) \geq 2 C_R$.  
\end{proof}

The following lemma is standard in coarse geometry, rather known as the uniform contractibility of the universal covering of an aspherical manifold (see e.g.~\cite{roeLecturesCoarseGeometry2003}*{Example 5.26}).
\begin{lem}\label{lem:universal_covering}
Let $(M,g)$ be a closed Riemannian manifold. Then the universal covering $\widetilde{M}$ is uniformly $1$-connected. 
\end{lem}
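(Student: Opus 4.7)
The plan is to exploit cocompactness of the action of $\Gamma := \pi_1(M)$ on $\widetilde{M}$ together with simple connectivity. Since $\Gamma$ acts by isometries and $\widetilde{M}/\Gamma = M$ is compact, it suffices to bound, for a fixed basepoint $x_0 \in \widetilde{M}$, the least radius $S(R)$ such that every loop in $B_R(x_0)$ is null-homotopic in $B_{S(R)}(x_0)$: once this is done, for arbitrary $x \in \widetilde{M}$ one has $B_R(x) \subset B_{R+D_0}(\gamma x_0)$ for some $\gamma \in \Gamma$, where $D_0$ is the diameter of a fundamental domain, and, by $\Gamma$-equivariance, the null-homotopy can be transported back, giving a uniform modulus $\varphi(R) := S(R + D_0) + D_0$ (replaced by its smallest non-decreasing upper bound if necessary).

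To bound $S(R)$, I would pick a regular value $R' > R$ of the distance function $d(x_0, \cdot)$ (available by Sard's theorem, since this function is smooth off the cut locus), so that $\overline{B_{R'}(x_0)}$ is a compact smooth manifold with boundary and hence a finite CW complex. Its fundamental group $\pi_1(\overline{B_{R'}(x_0)}, x_0)$ is therefore finitely generated by loops $\ell_1, \ldots, \ell_k$. Since $\widetilde{M}$ is simply connected, each $\ell_i$ bounds a compact singular disk $H_i \colon D^2 \to \widetilde{M}$; set $S_0 := \max_i \sup_{p \in D^2} d(x_0, H_i(p))$, which is finite.

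A loop $\ell$ based at $y \in B_R(x_0)$ becomes, after conjugation by a minimizing geodesic $\sigma$ from $y$ to $x_0$, a loop $\sigma\ell\sigma^{-1}$ based at $x_0$ inside $\overline{B_{R'}(x_0)}$. Its class in $\pi_1(\overline{B_{R'}(x_0)}, x_0)$ expresses as a word in the $\ell_i^{\pm 1}$, providing a homotopy inside $\overline{B_{R'}(x_0)}$; the composed null-homotopies $H_i$ then contract the resulting word, yielding a null-homotopy of $\ell$ inside $B_{\max(S_0, R')}(x_0)$. Hence $S(R) \leq \max(S_0, R') < \infty$, as required.

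The main obstacle is the finite-generation step: it requires choosing $R'$ to be a regular value, invoking Sard's theorem for the (almost-everywhere smooth) distance function. An equivalent combinatorial route, closer to the coarse-geometric tradition cited, would be to lift a finite smooth triangulation of $M$ to a $\Gamma$-equivariant triangulation of $\widetilde{M}$: cocompactness ensures that only finitely many edge loops of bounded combinatorial length appear up to $\Gamma$-translation, so the combinatorial filling function is automatically finite-valued, and a standard piecewise-geodesic approximation translates this into the desired metric modulus $\varphi$.
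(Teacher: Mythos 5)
Your overall strategy is the same as the paper's: first prove a finite filling radius for balls, using finite generation of $\pi_1$ of a suitable compact neighbourhood together with simple connectivity of $\widetilde{M}$ (each generator bounds a compact disk, hence a disk inside some larger ball), and then promote this to a uniform modulus $\varphi$ by $\Gamma$-equivariance and cocompactness. Your uniformization step and the word-in-generators filling argument are fine. The genuine gap is exactly the step you flag: you cannot ``pick a regular value $R'>R$ of $d(x_0,\cdot)$ by Sard's theorem''. The distance function is only Lipschitz; it is smooth away from $\{x_0\}\cup \mathrm{Cut}(x_0)$, but Sard's theorem applied to that smooth restriction says nothing about the values attained on the cut locus, and the image of a measure-zero set under a Lipschitz function to $\bR$ can easily have positive measure, even contain intervals. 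The relevant notion would be regular values in the Grove--Shiohama sense, for which there is no Sard-type theorem (the set of critical values of a distance function need not be small), and even at such a value the sublevel set is only a topological manifold with boundary. So, as written, the assertion that $\overline{B_{R'}(x_0)}$ is a compact manifold with boundary, hence has finitely generated $\pi_1$, is unjustified.

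The gap is local and repairable, and your own fallback is essentially the repair. Two clean fixes: (i) replace $d(x_0,\cdot)$ by a smooth proper exhaustion function $f$ with $|f-d(x_0,\cdot)|\le 1$ and apply the genuine Sard theorem to $f$, so that $f^{-1}([0,c])$ is a compact smooth manifold with boundary squeezed between metric balls; or (ii) argue as the paper does: cover $\overline{B_R(x_0)}$ by finitely many sufficiently small geodesic balls (radius below the injectivity/convexity radius of the compact quotient $M$), so that the union is homotopy equivalent to its finite nerve and therefore has finitely generated $\pi_1$; after that the filling-by-simple-connectivity argument is identical to yours, and the paper even skips your basepoint bookkeeping by bounding a quantity $C_{x,R}$ at every $x$ and using $\Gamma$-invariance plus local boundedness instead of a fundamental domain. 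Your second, combinatorial route (equivariant lift of a finite triangulation, finitely many edge loops of bounded combinatorial length modulo $\Gamma$, hence a finite filling function) is sound and standard, but it is only a sketch: to close it you still need the simplicial/piecewise-geodesic approximation step turning an arbitrary loop in $B_R(x)$ into an edge loop of controlled combinatorial length at controlled distance, which requires the same quantitative care you are trying to avoid in the first route.
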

\begin{proof}
For $R>0$ and $x \in \widetilde{M}$, let $C_{x,R}$ denote the infimum of the real numbers $C \geq R$ such that the inclusion $B_R(x) \to B_C(x)$ induces the trivial map in $\pi_1$-groups. 
We show that $C_{x,R} < \infty$ for any $x$ and $R$. 
Let $r>0 $ be less than the injectivity radius of $M$. 
Then $B_R(x)$ is covered by a finite number of open balls $B_r(y_i)$ for $i=1,\cdots, k$. 
The open subspace $U:=\bigcup_{i=1}^k B_r(y_i) \subset \widetilde{M}$ is homotopy equivalent to its nerve, and hence has the homotopy type of a finite simplicial complex. 
In particular, its fundamental group is finitely generated, and hence there is $L>0$ such that $U \subset B_L(x)$ and the induced map $\pi_1(U) \to \pi_1(B_L(x))$ is trivial. 
This shows $C_{x,R}\leq L <\infty$, as desired. 

The assignment $x \mapsto C_{x,R}$ is $\Gamma$-invariant and locally bounded (indeed, $C_{y,R} \leq C_{x,R+\varepsilon} $ for any $y \in B_\varepsilon(x)$).
Therefore, together with compactness of $M=\widetilde{M}/\Gamma$, we get $\varphi (R):= \sup_{x \in \widetilde{M}} (C_{x,R} +1) < \infty$. 
This $\varphi$ is the desired function since it is by definition increasing.
\end{proof}

\subsection{Lifting finite propagation operators}
Next, we construct a lift of an operator on $\widetilde{M}$ to the non-compact Riemannian bands $\widetilde{V}_n$, which forms a $\ast$-homomorphism `modulo boundary'. This technique is inherited from \cite{kubotaCodimensionTransferHigher2021}.

Let $T \in \bB(L^2(\widetilde{M}))$ which is locally Hilbert--Schmidt, i.e., $Tf$, $fT$ is in the Hilbert--Schmidt class for any $f \in C_c(\widetilde{M})$.  
Such $T$ is represented by a kernel function $T \colon \widetilde{M} \times \widetilde{M} \to \bC$ as
\[T \xi (x):= \int_{y \in \widetilde{M}} T(x,y)\xi(y)d \vol _g(y). \]
The support of $T$ is defined as the support of its kernel function in $ \widetilde{M} \times \widetilde{M}$, and $\mathrm{Prop} (T):=\sup\{  d(x,y) \mid (x,y) \in \supp T \} $ is called the propagation of $T$, where $d$ denotes the metric on $\widetilde{M}$. 
We define $\bC_{\mathrm{HS}} [\widetilde{M}]^\Gamma$ as the Real $\ast$-algebra of $\Gamma$-invariant locally Hilbert--Schmidt operators with finite propagation. Let $C^*(\widetilde{M})^\Gamma$ denote the completion of $\bC_{\mathrm{HS}}[\widetilde{M}]^\Gamma$ with respect to the maximal norm satisfying the C*-condition $\| T^*T\| =\|T\|^2$ (which is well-defined by \cite{gongGeometrizationStrongNovikov2008}*{3.5}). 
In the same way, we also define $\bC_{\mathrm{HS}}[\widetilde{V}_n]^{\pi_n} \subset \bB(L^2(\widetilde{V}_n))$ and its completion $C^*(\widetilde{V}_n)^{\pi_n}$.

We define the Real $\ast$-algebra $\bC_{\mathrm{HS}}[\widetilde{\mathbf{V}}]^{\boldsymbol{\pi}}$ consisting of sequences of locally compact operators $T_n \in \bB(L^2(\widetilde{V}_n))$ with a uniform bound of propagation, i.e., 
\[ \bC_{\mathrm{HS}}[\widetilde{\mathbf{V}}]^{\boldsymbol{\pi}} := \Big\{ (T_n) \in \prod_{n \in \bN} \bC_{\mathrm{HS}}[\widetilde{V}_n]^{\pi_n}  \ \Big| \ \text{ $\exists R>0$ such that $\mathrm{Prop}(T_n) <R$ for any $n$}  \Big\} \]
and let $C^*(\widetilde{\mathbf{V}})^{\boldsymbol{\pi}}$ denote its closure in the direct product C*-algebra $\prod _{n \in \bN} C^*(\widetilde{V}_n)^{\pi_n}$. We also define the Real C*-ideal $C^*(\partial \widetilde{\mathbf{V}} \subset \widetilde{\mathbf{V}})^{\boldsymbol{\pi}} \triangleleft C^*(\widetilde{\mathbf{V}})^{\boldsymbol{\pi}}$ as the closure of 
\[ \bC_{\mathrm{HS}}[\partial \widetilde{\mathbf{V}}\subset \widetilde{\mathbf{V}}]^{\boldsymbol{\pi}} := \bigg\{ (T_n) \in \prod_{n \in \bN} \bC_{\mathrm{HS}}[\widetilde{V}_n]^{\pi_n}  \ \bigg| \ \text{ \parbox{36ex}{$\exists R>0$ such that $\mathrm{Prop}(T_n) <R$ and $d(\supp T_n, \partial \widetilde{V}_n \times \partial \widetilde{V}_n) <R$ for any $n$} } \bigg\}_{\textstyle ,} \]
where $d(\supp T_n, \partial \widetilde{V}_n \times \partial \widetilde{V}_n) $ stands for the Hausdorff distance.

Let $D^*(\widetilde{M})^\Gamma$ denote the closure of the set of bounded operators on $L^2(\widetilde{M})$ which is of finite propagation and is pseudo-local, i.e., $[T,f] \in \bK$ for any $f \in C_c(\widetilde{M})$, with respect to the norm 
\[ \| T\|_{D^*(\widetilde{M})^\Gamma}:=\sup _{S \in \bC_{\mathrm{HS}}[\widetilde{M}]^\Gamma \setminus \{0\}} \frac{ \| TS\| _{C^*(\widetilde{M})^\Gamma} }{ \| S\|_{C^*(\widetilde{M})^\Gamma}}.\] 
This is a Real C*-algebra \cite{oyono-oyonoTheoryMaximalRoe2009}*{Lemma 2.16}, including $C^*(\widetilde{M})^\Gamma$ as a Real C*-ideal. We write the quotient as $Q^*(\widetilde{M})^\Gamma := D^*(\widetilde{M})^\Gamma / C^*(\widetilde{M})^\Gamma$. 
A standard fact in coarse index theory is that the K-group of $Q^*(\widetilde{M})^\Gamma$ is isomorphic to the equivariant K-homology $\KO^\Gamma_*(\widetilde{M})$. 
In the same way, we also define the Real C*-algebras $D^*(\widetilde{\mathbf{V}})^{\boldsymbol{\pi}}$, $Q^*(\widetilde{\mathbf{V}})^{\boldsymbol{\pi}}$,  $D^*(\partial \widetilde{\mathbf{V}} \subset \widetilde{\mathbf{V}})^{\boldsymbol{\pi}}$, and $Q^*(\partial \widetilde{\mathbf{V}} \subset \widetilde{\mathbf{V}})^{\boldsymbol{\pi}}$. We only note that $D^*(\partial \widetilde{\mathbf{V}} \subset \widetilde{\mathbf{V}})^{\boldsymbol{\pi}}$ is the closure of the set of pseudo-local, finite propagation operators which is supported near $\partial V$ and $Tf, fT \in \bK$ for any $f \in C_c(\mathbf{V}\setminus \partial \mathbf{V})$.

\begin{para}\label{lem:lift}
There is a $\ast$-homomorphism 
\[
s \colon C^*(\widetilde{M})^\Gamma  \to C^*(\widetilde{\mathbf{V}})^{\boldsymbol{\pi}}/C^*(\partial \widetilde{\mathbf{V}} \subset \widetilde{\mathbf{V}})^{\boldsymbol{\pi}}
\]
constructed in the following way. 
For $T \in \bC_{\mathrm{HS}}[\widetilde{M}]^{\Gamma }$ with $\mathrm{Prop}(T) \leq C_R$, we define its lift $\widetilde{T}_{n,R} \in \bC_{\mathrm{HS}}[\widetilde{V}_n]^{\pi_n}$ in terms of its kernel function as
\begin{align} 
\begin{split}
\widetilde{T}_{n,R}(\tilde{x},\tilde{y}) := \begin{cases}
T(\tilde{f}_n(\tilde{x}),\tilde{f}_n(\tilde{y})) & \text{ if $d(\tilde{x}, \tilde{y}) <C_R$ and $\tilde{x}, \tilde{y} \in \widetilde{V}_{n,R}$, } \\
0 & \text{ otherwise. }
\end{cases} 
\end{split}\label{eq:lift}
\end{align}
A consequence of \cref{lem:systole_propagation} is that, for each $\tilde{x} \in \widetilde{V}_{n,R}$ and $y \in \widetilde{M}$ with $d(\tilde{f}_n(\tilde{x}), y) < C_R$, there is a unique $\tilde{y} \in \tilde{f}_n^{-1}(y)$ such that $\widetilde{T}_{n,R}(\tilde{x},\tilde{y}) \neq 0$. 
In particular, we have $\mathrm{Prop}(\widetilde{T}_{n,R}) =\mathrm{Prop}(T)$. 
Moreover, the $\Gamma_n$-invariance of $T$ implies the $\pi_n$-invariance of $\widetilde{T}_{n,R}$.

The assignment $T \mapsto \widetilde{T}_{n,R}$ is linear and $\ast$-preserving. Moreover, 
for $T,S \in \bC_{\mathrm{HS}}[\widetilde{M}]^\Gamma$ with $\mathrm{Prop}(T) + \mathrm{Prop}(S) \leq C_R$, the lifts $\widetilde{T}_{n,R}$, $\widetilde{S}_{n,R}$ and $\widetilde{TS}_{n,R}$ satisfies that 
\[ \int_{\tilde{z} \in \widetilde{V}_{n,R}}\widetilde{T}_{n,R}(\tilde{x},\tilde{z}) \widetilde{T}_{n,R}(\tilde{z},\tilde{y}) d\vol_g (z)= (\widetilde{TS})_{n,R} (\tilde{x}, \tilde{y}) \]
if $B_{C_R}(\tilde{x}) \cap B_{C_R}(\tilde{y}) \subset \widetilde{V}_{n,R}$. 
This implies that $\widetilde{T}_{n,R} \widetilde{S}_{n,R} - \widetilde{TS}_{n,R}$ is supported on the $(R + C_R)$-neighborhood of $\partial \widetilde{V}_n \times \partial \widetilde{V}_n$, i.e., the lifting is multiplicative modulo boundary.  

Recall that the assignment $R \mapsto C_R$ is increasing and $C_R \to \infty$ as $R \to \infty$. Also, for $R' > R > 0$ and $T \in \bC_{\mathrm{HS}}[\widetilde{M}]^\Gamma$ with $\mathrm{Prop}(T) \leq C_R$, the difference $\widetilde{T}_{n,R'} - \widetilde{T}_{n,R}$ is supported on the $R'$-neighborhood of $\partial \widetilde{V}_n \times \partial \widetilde{V}_n$. This shows that
\[s(T) := (\widetilde{T}_{n,R})_{n \in \bN} \in C^*(\widetilde{\mathbf{V}})^{\boldsymbol{\pi}}/C^*(\partial \widetilde{\mathbf{V}} \subset \widetilde{\mathbf{V}})^{\boldsymbol{\pi}}  \]
is well-defined independent of the choice of $R>0$ with $\mathrm{Prop}(T) < C_R$.
By the above argument, this $s$ forms a $\ast$-homomorphism from $\bC_{\mathrm{HS}}[\widetilde{M}]^\Gamma$. This extends to the $\ast$-homomorphism from $C^*(\widetilde{M})^\Gamma $ by the maximality of the norm on $C^*(\widetilde{M})^\Gamma$. 
\end{para}

\begin{para}\label{lem:liftD}
The $\ast$-homomorphism $s$ defined in \cref{lem:lift} extends to 
\[s \colon D^*(\widetilde{M})^\Gamma \to 
D^*(\widetilde{\mathbf{V}})^{\boldsymbol{\pi}}/D^*(\partial \widetilde{\mathbf{V}} \subset \widetilde{\mathbf{V}})^{\boldsymbol{\pi}}, \]
and hence induces 
\[ s \colon Q^*(\widetilde{M})^\Gamma  \to Q^*(\widetilde{\mathbf{V}})^{\boldsymbol{\pi}} / Q^*(\partial \widetilde{\mathbf{V}} \subset \widetilde{\mathbf{V}})^{\boldsymbol{\pi}}.\]
Indeed, without loss of generality, we may assume that there are Riemannian bands $V_n'$ such that $V_n \subset V_n'$, $f_n$ extends to an isometric immersion of $V_n'$, and $\mathrm{dist}(\partial V_n, \partial V_n') \geq 1$.
Let us decompose an operator $T \in D^*(\widetilde{M})^\Gamma$ into $T=T^0 + T^1$, where $\mathrm{Prop}(T^0) < C_1$ and $T^1 \in C^*(\widetilde{M})^\Gamma$. Set 
\[ s(T) := (\Pi_n (\widetilde{T^0})_{n,1}' \Pi_n )_{n \in \bN} + s(T^1) \in D^*(\widetilde{\mathbf{V}})^{\boldsymbol{\pi}}/D^*(\partial \widetilde{\mathbf{V}} \subset \widetilde{\mathbf{V}})^{\boldsymbol{\pi}},  \]
where $(\cdot)_{n,1}'$ denotes the lift \eqref{eq:lift} with respect to $V_n'$ and $R=1$, and $\Pi_n $ denotes the projection onto $L^2(V_n)$.
This $s$ is well-defined independent of the choice of a decomposition $T=T_0+T_1$, and forms a $\ast$-homomorphism. 
We omit the detail of the proof, because it is proved completely in the same way as \cite{kubotaCodimensionTransferHigher2021}*{Proposition 4.3}. 
\end{para}

\subsection{K-theory and the coarse index of Dirac operators}
In the last step, we relate the equivariant coarse index of $\widetilde{M}$ with that of  $\partial \widetilde{V}_n$ through the lifting homomorphism $s$ constructed above. 

We define the ideal $C^*_0 (\partial \widetilde{\mathbf{V}} \subset \widetilde{\mathbf{V}})^{\boldsymbol{\pi}}$ of $C^*(\partial \widetilde{\mathbf{V}} \subset \widetilde{\mathbf{V}})^{\boldsymbol{\pi}}$ consisting of operators $(T_n)_n$ such that $\| T_n\| \to 0$ as $n \to \infty$, and set
\[C^*_\flat (\partial \widetilde{\mathbf{V}} \subset \widetilde{\mathbf{V}})^{\boldsymbol{\pi}} := C^* (\partial \widetilde{\mathbf{V}} \subset \widetilde{\mathbf{V}})^{\boldsymbol{\pi}} /C^*_0 (\partial \widetilde{\mathbf{V}} \subset \widetilde{\mathbf{V}})^{\boldsymbol{\pi}}. \]
We also define the Real C*-algebras $D^*_\flat (\partial \widetilde{\mathbf{V}} \subset \widetilde{\mathbf{V}})^{\boldsymbol{\pi}}$ and $Q^*_\flat (\partial \widetilde{\mathbf{V}} \subset \widetilde{\mathbf{V}})^{\boldsymbol{\pi}}$ in the same way. 
\begin{lem}\label{lem:K}
There are isomorphisms
\begin{align*}
   \phi  \colon \KO_* ( C^*_\flat (\partial \widetilde{\mathbf{V}} \subset \widetilde{\mathbf{V}})^{\boldsymbol{\pi}} ) &\cong \frac{\prod \KO_*(C^*(\partial_+ \widetilde{V}_n)^{\pi_n})}{\bigoplus \KO_*(C^*(\partial_- \widetilde{V}_n)^{\pi_n})} \oplus \frac{\prod \KO_*(C^*(\partial_- \widetilde{V}_n)^{\pi_n})}{\bigoplus \KO_*(C^*(\partial_- \widetilde{V}_n)^{\pi_n})},\\ 
    \phi \colon \KO_* ( D^*_\flat (\partial \widetilde{\mathbf{V}} \subset \widetilde{\mathbf{V}})^{\boldsymbol{\pi}} ) &\cong \frac{\prod \KO_*(D^*(\partial_+ \widetilde{V}_n)^{\pi_n})}{\bigoplus \KO_*(D^*(\partial_+ \widetilde{V}_n)^{\pi_n})} \oplus \frac{\prod \KO_*(D^*(\partial_- \widetilde{V}_n)^{\pi_n})}{\bigoplus \KO_*(D^*(\partial_- \widetilde{V}_n)^{\pi_n})},\\
    \phi \colon\KO_* ( Q^*_\flat (\partial \widetilde{\mathbf{V}} \subset \widetilde{\mathbf{V}})^{\boldsymbol{\pi}} ) &\cong \frac{\prod \KO_*(Q^*(\partial_+ \widetilde{V}_n)^{\pi_n})}{\bigoplus \KO_*(Q^*(\partial_+ \widetilde{V}_n)^{\pi_n})} \oplus \frac{\prod \KO_*(Q^*(\partial_- \widetilde{V}_n)^{\pi_n})}{\bigoplus \KO_*(Q^*(\partial_- \widetilde{V}_n)^{\pi_n})}.
\end{align*}  
\end{lem}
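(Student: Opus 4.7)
The plan is to prove the three isomorphisms in parallel in three stages: first a geometric separation of the two boundary components, then a coarse identification of each one-sided algebra with the associated boundary Roe algebra, and finally the standard description of K-theory of a $\prod/\bigoplus$-type asymptotic quotient. The last of these will be the main technical obstacle.

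The first stage rests on the fact that for any $R>0$ and any $n$ with $\mathrm{width}(V_n) > 2R$, the $R$-neighborhoods $N_R(\partial_+ \widetilde{V}_n)$ and $N_R(\partial_- \widetilde{V}_n)$ in $\widetilde{V}_n$ are disjoint. An operator $T_n \in \bC_{\mathrm{HS}}[\widetilde{V}_n]^{\pi_n}$ of propagation $\leq R$ whose support lies within Hausdorff distance $R$ of $\partial \widetilde{V}_n \times \partial \widetilde{V}_n$ then decomposes canonically as $T_n = T_n^+ + T_n^-$ with $T_n^\pm$ supported in $N_R(\partial_\pm) \times N_R(\partial_\pm)$, because any ``cross'' pair $(x,y)$ with $x \in N_R(\partial_+)$, $y \in N_R(\partial_-)$ satisfies $d(x,y) > \mathrm{width}(V_n) - 2R > R$, exceeding the propagation. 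Introducing one-sided algebras $C^*(\partial_\pm \widetilde{\mathbf{V}} \subset \widetilde{\mathbf{V}})^{\boldsymbol{\pi}}$ and their $\flat$-quotients in the obvious way, and observing that the failure of the decomposition for small $n$ is absorbed by $C^*_0$, one obtains a $\ast$-isomorphism
\[ C^*_\flat(\partial \widetilde{\mathbf{V}} \subset \widetilde{\mathbf{V}})^{\boldsymbol{\pi}} \;\cong\; C^*_\flat(\partial_+ \widetilde{\mathbf{V}} \subset \widetilde{\mathbf{V}})^{\boldsymbol{\pi}} \;\oplus\; C^*_\flat(\partial_- \widetilde{\mathbf{V}} \subset \widetilde{\mathbf{V}})^{\boldsymbol{\pi}}, \]
and likewise for the $D^*_\flat$ and $Q^*_\flat$ variants. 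The Real involution is preserved since $T_n \mapsto T_n^\pm$ is intrinsic.

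The second stage exploits that each collar $N_R(\partial_\pm \widetilde{V}_n)$ is equivariantly coarsely equivalent to $\partial_\pm \widetilde{V}_n$ via the nearest-point retraction, with coarse constants depending only on $R$ and not on $n$. Standard functoriality of maximal equivariant Roe algebras under coarse equivalences then yields K-theoretic identifications, uniform in $n$, between $C^*(\partial_\pm \widetilde{V}_n \subset \widetilde{V}_n)^{\pi_n}$ and $C^*(\partial_\pm \widetilde{V}_n)^{\pi_n}$, and analogously for the $D^*$ and $Q^*$ variants. Uniformity in $n$ allows these identifications to be assembled into morphisms between the corresponding $\prod_n$-algebras respecting the $\bigoplus_n$-subideals, hence descending to the $\flat$-quotients. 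For the third stage, one applies to the sequence $A_n = C^*(\partial_\pm \widetilde{V}_n)^{\pi_n}$ (and to the $D^*$, $Q^*$ analogs) the standard formula
\[ \KO_*\bigl(\textstyle\prod_n A_n \big/ \bigoplus_n A_n\bigr) \;\cong\; \prod_n \KO_*(A_n) \big/ \bigoplus_n \KO_*(A_n), \]
which follows from the long exact sequence of $0 \to \bigoplus A_n \to \prod A_n \to \prod A_n/\bigoplus A_n \to 0$ together with the commutation of $\KO_*$ with $\bigoplus$ and, in the present setting, with $\prod$. Combining the three stages and summing over the two signs produces all three claimed isomorphisms.

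The principal obstacle is the commutation of $\KO_*$ with an infinite direct product of Roe-type algebras in the third stage, since this fails for general C*-algebras. Overcoming it requires exploiting structure specific to the maximal equivariant Roe algebras at hand---namely the uniform finite-propagation filtration and Mayer--Vietoris / Milnor $\limone$ arguments in the spirit of Gong--Wang--Yu and Hanke--Schick---while consistently tracking the Real involution throughout. A secondary, essentially bookkeeping, task is to verify that the geometric splitting of the first stage intertwines correctly with the coarse retractions of the second, and that each step is compatible with the pseudo-local conditions defining $D^*$ and $Q^*$.
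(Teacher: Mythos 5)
Your first stage coincides with the paper's first step: Kubota takes the $n/4$-neighborhoods $V_{n,\pm}$ of $\partial_\pm V_n$ and observes that the inclusion $C^*(\partial_+\widetilde{\mathbf{V}}\subset\widetilde{\mathbf{V}}_+)^{\boldsymbol{\pi}}\oplus C^*(\partial_-\widetilde{\mathbf{V}}\subset\widetilde{\mathbf{V}}_-)^{\boldsymbol{\pi}}\to C^*(\partial\widetilde{\mathbf{V}}\subset\widetilde{\mathbf{V}})^{\boldsymbol{\pi}}$ becomes a $\ast$-isomorphism after passing to the $\flat$-quotients, for exactly the width-versus-propagation reason you give (with the small-$n$ failure absorbed into $C^*_0$). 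From that point on, however, your argument has a genuine gap.

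The algebra appearing in the lemma is not a full product: $C^*(\partial_\pm\widetilde{\mathbf{V}}\subset\widetilde{\mathbf{V}}_\pm)^{\boldsymbol{\pi}}$ is by definition the closure of sequences admitting a \emph{common} propagation bound and a common bound on the distance of their supports to the boundary, and since the diameters of $\partial_\pm V_n$ are in general unbounded this is a proper subalgebra of $\prod_n C^*(\partial_\pm\widetilde{V}_n\subset\widetilde{V}_{n,\pm})^{\pi_n}$. Your stage 2 assembles the $n$-wise coarse equivalences (which do have uniform constants) into morphisms, but that only produces maps between such uniform-propagation algebras; it does not identify $C^*_\flat(\partial_\pm\widetilde{\mathbf{V}}\subset\widetilde{\mathbf{V}})^{\boldsymbol{\pi}}$ with $\prod_n C^*(\partial_\pm\widetilde{V}_n)^{\pi_n}\big/\bigoplus_n C^*(\partial_\pm\widetilde{V}_n)^{\pi_n}$, which is the object your stage 3 formula is about. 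Moreover, the one statement that actually carries the lemma --- that $\KO_*$ of the relevant asymptotic algebra is $\prod_n\KO_*\big/\bigoplus_n\KO_*$ --- is precisely what you defer to ``Mayer--Vietoris / Milnor $\limone$ arguments in the spirit of Gong--Wang--Yu and Hanke--Schick''. For the full product of maximal equivariant Roe algebras (each Morita equivalent to $C^*_{\max}\pi_n$) the commutation of $\KO_*$ with $\prod_n$ is simply not available and should not be expected, while the uniform finite-propagation filtration you propose to exploit is a feature of the uniform algebra you discarded by passing to the full product. So the proposal reduces the lemma to an unproved (and, in the form stated, dubious) claim rather than proving it. The paper's route is different at exactly this point: it never introduces the full product, but stays with the uniform algebras and reads the lemma off from the six-term exact sequence of $0\to C^*_0(\partial_\pm\widetilde{\mathbf{V}}\subset\widetilde{\mathbf{V}}_\pm)^{\boldsymbol{\pi}}\to C^*(\partial_\pm\widetilde{\mathbf{V}}\subset\widetilde{\mathbf{V}}_\pm)^{\boldsymbol{\pi}}\to C^*_\flat(\partial_\pm\widetilde{\mathbf{V}}\subset\widetilde{\mathbf{V}}_\pm)^{\boldsymbol{\pi}}\to 0$, where the ideal is a $c_0$-direct sum, so its $\KO$-theory is $\bigoplus_n\KO_*(C^*(\partial_\pm\widetilde{V}_n)^{\pi_n})$ by continuity of K-theory; the identification of the K-theory of the middle (uniform-propagation) term with $\prod_n\KO_*(C^*(\partial_\pm\widetilde{V}_n)^{\pi_n})$ is the input this argument rests on, and any complete write-up --- yours included --- has to establish that statement for the uniform algebra itself rather than for $\prod_n A_n$.
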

We write $\phi_\pm $ for the first and the second component of $\phi$ respectively. 
\begin{proof}
Let $V_{n,\pm } \subset V_{n}$ denote the $n/4$-neighborhood of $\partial _\pm V$ and let $\mathbf{V}_\pm := \bigsqcup _{n \in \bN} V_{n,\pm}$. 
Then the inclusion
\[ C^*(\partial_+ \widetilde{\textbf{V}} \subset \widetilde{\textbf{V}}_{+})^{\boldsymbol{\pi}} \oplus C^*(\partial_- \widetilde{\textbf{V}} \subset \widetilde{\textbf{V}}_{-}) ^{\boldsymbol{\pi}} \to C^*(\partial \widetilde{\textbf{V}} \subset \widetilde{\textbf{V}})^{\boldsymbol{\pi}} \]
induces a $\ast$-isomorphism
\[ C^*_\flat (\partial_+ \widetilde{\textbf{V}} \subset \widetilde{\textbf{V}}_{+})^{\boldsymbol{\pi}}\oplus C^*_\flat (\partial_- \widetilde{\textbf{V}} \subset \widetilde{\textbf{V}}_{-})^{\boldsymbol{\pi}} \to C^*_\flat (\partial \widetilde{\textbf{V}} \subset \widetilde{\textbf{V}})^{\boldsymbol{\pi}}. \]
Now the six-term exact sequence for the extension 
\[ 0 \to C^*_0(\partial_\pm  \widetilde{\mathbf{V}} \subset \widetilde{\mathbf{V}}_\pm )^{\boldsymbol{\pi}}  \to C^*(\partial_\pm  \widetilde{\mathbf{V}} \subset \widetilde{\mathbf{V}}_\pm)^{\boldsymbol{\pi}}  \to C^*_\flat (\partial_\pm \widetilde{\mathbf{V}} \subset \widetilde{\mathbf{V}}_\pm)^{\boldsymbol{\pi}}  \to 0\]
proves the first isomorphism. The second and the third isomorphisms are also proved in the same way. 
\end{proof}
Let 
\begin{align}
    \partial \colon \KO_*\Big( \frac{C^*(\widetilde{\mathbf{V}})^{\boldsymbol{\pi}}}{C^*(\partial \widetilde{\mathbf{V}} \subset \widetilde{\mathbf{V}})^{\boldsymbol{\pi}}}\Big)  \to \KO_{*-1}(C^*_\flat (\partial \widetilde{\mathbf{V}} \subset \widetilde{\mathbf{V}})^{\boldsymbol{\pi}}) \label{eq:boundary_flat}
\end{align}
denote the K-theory boundary map associated to the exact sequence
\[0 \to \frac{C^* (\partial \widetilde{\mathbf{V}} \subset \widetilde{\mathbf{V}})^{\boldsymbol{\pi}}}{C^*_0 (\partial \widetilde{\mathbf{V}} \subset \widetilde{\mathbf{V}})^{\boldsymbol{\pi}}}  \to \frac{C^*(\widetilde{\mathbf{V}})^{\boldsymbol{\pi}}}{C^*_0(\partial \widetilde{\mathbf{V}} \subset \widetilde{\mathbf{V}})^{\boldsymbol{\pi}}} \to \frac{C^*(\widetilde{\mathbf{V}})^{\boldsymbol{\pi}}}{C^*(\partial \widetilde{\mathbf{V}} \subset \widetilde{\mathbf{V}})^{\boldsymbol{\pi}}} \to 0. \]
We also use the same letter $\partial$ for the $\KO$-theory boundary map of the same kind defined for $D^*$ and $Q^*$ coarse C*-algebras. 

Let $\slashed{D}_{\widetilde{M}}$, $\slashed{D}_{\widetilde{V}_n}$, $\slashed{D}_{\partial_\pm  \widetilde{V}_n}$ denote the $\Cl_{d,0}$-linear Dirac operator on $\widetilde{M}$ and $\widetilde{V}_n$, and the $\Cl_{d-1,0}$-linear Dirac operator on $\partial_\pm \widetilde{V}_n$ respectively (for the definition of $\Cl_{d,0}$-linear Dirac operator on a $d$-dimensional spin manifold, see e.g.~\cite{lawsonjr.SpinGeometry1989}*{Chapter II, \S 7}). 
We also consider the Clifford-linear Dirac operators $\slashed{D}_{\widetilde{\mathbf{V}}}$ and $\slashed{D}_{\partial_\pm \widetilde{\mathbf{V}}}$, each of which is the same thing as the family $(\slashed{D}_{\widetilde{V}_n})_{n \in \bN}$ and $(\slashed{D}_{\partial _{\pm }\widetilde{V}_n})_{n \in \bN }$ respectively. 
Let $\chi \colon \bR \to \bR$ is a continuous function such that $\chi(t) \to \pm 1$ as $t \to \pm \infty$ and the support of $\hat{\chi}$ is in $(-1,1)$.
Then $\chi(\slashed{D}_{\widetilde{M}})$ is an odd self-adjoint operator in $D^*(\widetilde{M})^\Gamma$ by \cite{roePartitioningNoncompactManifolds1988}*{Proposition 2.3}, whose image in $Q^*(\widetilde{M})^\Gamma$ is unitary. Hence it determines an element of $\KO_{d+1}(Q^*(\widetilde{M})^\Gamma)$ (cf.~\cite{kubotaCodimensionTransferHigher2021}*{Remark A.1}), which is denoted by $[\slashed{D}_{\widetilde{M}}]$ in short. 
Similarly, we define $[\slashed{D}_{\partial_\pm  \widetilde{V}_n}] \in \KO_{d}(Q^*(\partial _\pm \widetilde{V}_n)^{\pi_n})$, and $[\slashed{D}_{\partial \widetilde{\mathbf{V}}}] \in \KO_{d}(Q^*(\partial \widetilde{\mathbf{V}})^{\boldsymbol{\pi}})$. 
We also define the relative KO-class of the Dirac operators on manifolds with boundary 
$[\slashed{D}_{\widetilde{V}_n}] \in \KO_{d+1}(Q^*(\widetilde{V}_n)^{\pi_n} / Q^*(\partial \widetilde{V}_n \subset \widetilde{V}_n)^{\pi_n})$ and $[\slashed{D}_{\widetilde{\mathbf{V}}}] \in \KO_{d+1}(Q^*(\widetilde{\mathbf{V}})^{\boldsymbol{\pi}}/ Q^*(\partial \widetilde{\mathbf{V}} \subset \widetilde{\mathbf{V}})^{\boldsymbol{\pi}})$.


The K-theory boundary map of the extensions $0 \to C^*(X)^G \to D^*(X)^G \to Q^*(X)^G \to 0$ are denoted by $\Ind _G$, where $(X,G)$ is $(\widetilde{M},\Gamma)$, $(V_n, \pi_n)$, $(\widetilde{\mathbf{V}}, \boldsymbol{\pi})$, and so on. 
Note that $C^*(M)^\Gamma$ is Morita equivalent to the maximal group C*-algebra $C^*\Gamma$ and the Rosenberg index $\alpha_\Gamma(M)$ is the same thing with $\Ind_\Gamma([\slashed{D}_{\widetilde{M}}]) $.

Hereafter, for a sequence of abelian groups $A_n$ and an element $(a_n)_n \in \prod A_n$, we write $(a_n)_n^\flat$ for its image by the quotient map $\prod A_n \to \prod A_n / \bigoplus A_n$.
\begin{lem}
The composition
\[\KO_{d+1}(Q^*(\widetilde{M})^\Gamma ) \xrightarrow{s_*} \KO_{d+1}\Big( \frac{Q^*(\widetilde{\mathbf{V}})^{\boldsymbol{\pi}}}{Q^*(\partial \widetilde{\mathbf{V}} \subset \widetilde{\mathbf{V}})^{\boldsymbol{\pi}}} \Big) \xrightarrow{\partial} \KO_{d} ( Q^*_\flat (\partial \widetilde{\mathbf{V}} \subset \widetilde{\mathbf{V}})^{\boldsymbol{\pi}} ) \xrightarrow{\phi_+} \frac{\prod \KO_d(Q^*(\partial_+ \widetilde{V}_n)^{\pi_n})}{\bigoplus \KO_d(Q^*(\partial_+ \widetilde{V}_n)^{\pi_n})} \]
sends $[\slashed{D}_{\widetilde{M}}]$ to $([\slashed{D}_{\partial_+ \widetilde{V}_n } ])_n^\flat$.
\end{lem}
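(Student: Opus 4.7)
The plan is to trace the class $[\slashed{D}_{\widetilde{M}}]$ through the three maps of the composition $\phi_+ \circ \partial \circ s_*$ in three stages: first identify $s_*[\slashed{D}_{\widetilde{M}}]$ with the coarse Dirac class $[\slashed{D}_{\widetilde{\mathbf{V}}}]$ on the band family, then compute $\partial[\slashed{D}_{\widetilde{\mathbf{V}}}]$ in terms of boundary Dirac operators, and finally apply the projection $\phi_+$ onto the $\partial_+$-summand.

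For the first stage, I would exploit finite propagation speed together with \cref{lem:systole_propagation}. By the choice of $\chi$ with $\hat{\chi}$ supported in $(-1, 1)$, the operator $\chi(\slashed{D}_{\widetilde{M}})$ has propagation at most $1$, and likewise $\chi(\slashed{D}_{\widetilde{V}_n})$ on each band. Since $\tilde{f}_n$ is a codimension-$0$ isometric immersion, it intertwines the Dirac operators locally. For $\tilde{x} \in \widetilde{V}_{n,R}$ and any $\tilde{y}$ with $d(\tilde{x},\tilde{y}) < C_R$, \cref{lem:systole_propagation} ensures that $\tilde{y}$ is the unique $\pi_n$-lift of $\tilde{f}_n(\tilde{y})$ close to $\tilde{x}$. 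Hence the lifted kernel in \eqref{eq:lift} applied to $\chi(\slashed{D}_{\widetilde{M}})$ coincides with the kernel of $\chi(\slashed{D}_{\widetilde{V}_n})$ on $\widetilde{V}_{n,R} \times \widetilde{V}_{n,R}$, with discrepancy supported in an $R$-neighborhood of $\partial\widetilde{V}_n \times \partial\widetilde{V}_n$. This yields $s_*[\slashed{D}_{\widetilde{M}}] = [\slashed{D}_{\widetilde{\mathbf{V}}}]$ in $\KO_{d+1}(Q^*(\widetilde{\mathbf{V}})^{\boldsymbol{\pi}}/Q^*(\partial\widetilde{\mathbf{V}} \subset \widetilde{\mathbf{V}})^{\boldsymbol{\pi}})$.

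For the second stage I would invoke the standard coarse index principle that for a spin manifold $W$ with boundary, the $K$-theory boundary of the extension $0 \to Q^*(\partial W \subset W)^\pi \to Q^*(W)^\pi \to Q^*(W)^\pi/Q^*(\partial W \subset W)^\pi \to 0$ sends the relative Dirac class $[\slashed{D}_W]$ to the coarse index $[\slashed{D}_{\partial W}]$ under the canonical isomorphism $\KO_*(Q^*(\partial W \subset W)^\pi) \cong \KO_*(Q^*(\partial W)^\pi)$. Applying this uniformly across the family $\{\widetilde{V}_n\}$ and pushing through the further $Q^*_0$-quotient that defines $Q^*_\flat$, one obtains $\partial[\slashed{D}_{\widetilde{\mathbf{V}}}] = ([\slashed{D}_{\partial\widetilde{V}_n}])_n^\flat$. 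Since $\partial\widetilde{V}_n = \partial_+\widetilde{V}_n \sqcup \partial_-\widetilde{V}_n$ is a disjoint union, the direct-sum splitting constructed in the proof of \cref{lem:K} identifies this element with $([\slashed{D}_{\partial_+\widetilde{V}_n}])_n^\flat \oplus ([\slashed{D}_{\partial_-\widetilde{V}_n}])_n^\flat$, so $\phi_+$ extracts the first summand, completing the argument.

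The hard part is the second stage. While the boundary-Dirac compatibility is well known for a single manifold with boundary, here it must be enacted uniformly over the sequence of bands and routed through the double quotient defining $Q^*_\flat$. Concretely, one needs a simultaneous lift of $\chi(\slashed{D}_{\widetilde{\mathbf{V}}})$ to $Q^*(\widetilde{\mathbf{V}})^{\boldsymbol{\pi}}/Q^*_0(\partial\widetilde{\mathbf{V}}\subset\widetilde{\mathbf{V}})^{\boldsymbol{\pi}}$ whose $\pi_n$-component on each band is a standard representative whose coboundary is the boundary Dirac class; verifying via a uniform tubular-neighborhood cut-off that this coboundary matches $([\slashed{D}_{\partial\widetilde{V}_n}])_n^\flat$ in the $\flat$-group is the essential technical content of the step.
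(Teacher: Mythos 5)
Your proposal is correct and follows essentially the same route as the paper: identify $s_*[\slashed{D}_{\widetilde{M}}]=[\slashed{D}_{\widetilde{\mathbf{V}}}]$, apply the boundary-of-Dirac-is-Dirac principle to compute $\partial$, and read off the $\partial_+$-component via $\phi_+$. The only difference is cosmetic, in the first step: you argue via finite propagation speed and the unique-lift consequence of \cref{lem:systole_propagation} that the lifted kernel of $\chi(\slashed{D}_{\widetilde{M}})$ agrees with that of $\chi(\slashed{D}_{\widetilde{V}_n})$ away from the boundary, whereas the paper observes that $s(\chi(\slashed{D}_{\widetilde{M}}))$ and $\chi(\slashed{D}_{\widetilde{\mathbf{V}}})$ are $0$-th order pseudodifferential operators with the same principal symbol; both justifications give the same conclusion modulo $Q^*(\partial\widetilde{\mathbf{V}}\subset\widetilde{\mathbf{V}})^{\boldsymbol{\pi}}$.
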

\begin{proof}
Firstly, we have $s_*[\slashed{D}_{\widetilde{M}}] = [\slashed{D}_{\widetilde{\mathbf{V}}}]$. 
This is because the operators $s(\chi(\slashed{D}_{\widetilde{M}}))$ and $\chi (\slashed{D}_{\mathbf{V}})$ are both
$0$-th order pseudo-differential operators and their principal symbols are the same. 
Next, $\partial [\slashed{D}_{\widetilde{\mathbf{V}}}] = [\slashed{D}_{\partial _+ \widetilde{\mathbf{V}}}]$ follows from the `boundary of Dirac is Dirac' principle (see e.g.~\cite{higsonAnalyticHomology2000}*{Proposition 11.2.15}). 
Finally, $\phi_+([\slashed{D}_{\partial _+ \widetilde{\mathbf{V}}}]) = ([\slashed{D}_{\partial_+ \widetilde{V}_n } ])_n^\flat$ is obvious from the definition. 
\end{proof}

\begin{proof}[{Proof of \cref{thm:main}}]
By definition, the diagram 
\[
\xymatrix{
\KO_{d+1}(Q^*(\widetilde{M})^\Gamma ) \ar[r]^{s_*} \ar[d]^{\Ind_\Gamma } & 
\KO_{d+1}\Big( \frac{Q^*(\widetilde{\mathbf{V}})^{\boldsymbol{\pi}}}{Q^*(\partial \widetilde{\mathbf{V}} \subset \widetilde{\mathbf{V}})^{\boldsymbol{\pi}}} \Big) \ar[r]^{\partial } \ar[d]^{\Ind_{\boldsymbol{\pi}}} & 
\KO_{d} ( Q^*_\flat (\partial \widetilde{\mathbf{V}} \subset \widetilde{\mathbf{V}})^{\boldsymbol{\pi}} ) \ar[d]^{\Ind_{\boldsymbol{\pi}}}  \ar[r]^{\phi_+} & \frac{\prod \KO_d(Q^*(\partial_+ \widetilde{V}_n)^{\pi_n})}{\bigoplus \KO_d(Q^*(\partial_+ \widetilde{V}_n)^{\pi_n})} \ar[d]^{(\Ind_{\pi_n})_n^\flat} \\
\KO_d(C^*(\widetilde{M})^\Gamma ) \ar[r]^{s_*} & 
\KO_d\Big( \frac{C^*(\widetilde{\mathbf{V}})^{\boldsymbol{\pi}}}{C^*(\partial \widetilde{\mathbf{V}} \subset \widetilde{\mathbf{V}})^{\boldsymbol{\pi}}} \Big) \ar[r]^{\partial }  & 
\KO_{d-1} ( C^*_\flat (\partial \widetilde{\mathbf{V}} \subset \widetilde{\mathbf{V}})^{\boldsymbol{\pi}} ) \ar[r] ^{\phi_+}
& \frac{\prod \KO_{d-1}(C^*(\partial_+ \widetilde{V}_n)^{\pi_n})}{\bigoplus \KO_{d-1}(C^*(\partial_+ \widetilde{V}_n)^{\pi_n})}
}
\]
commutes. Therefore, we get
\begin{align*}
    (\phi_+ \circ \partial \circ s_*) (\Ind _\Gamma ([\slashed{D}_{\widetilde{M}}])) &= (\Ind_{\pi_n})_n^\flat ((\phi_+ \circ \partial \circ s_*)([\slashed{D}_{\widetilde{M}}])) =  (\Ind _{\pi_n} ([\slashed{D}_{\partial_+ \widetilde{V}_n} ]))_n ^\flat . 
\end{align*}  
The right hand side is non-zero by assumption. This shows the non-vanishing of $\Ind _\Gamma ([\slashed{D}_{\widetilde{M}}])$ as desired. 
\end{proof}

\section{Multiwidth of cube-like domains and the Rosenberg index}\label{section:cube}
In this section we study a `multi-dimensional' generalization of \cref{thm:main}.
\begin{defn}\label{defn:square}
A $d$-dimensional Riemannian $\square^{m}$-domain is a compact Riemannian manifold $(V,g)$ with corners equipped with a face-preserving smooth map $f \colon V \to [-1,1]^m$ which is corner proper, i.e., $1$-faces of $V$ are pull-backs of $1$-faces of $[-1,1]^m$ (cf.~\cite{gromovFourLecturesScalar2019}*{3.18}).  
\end{defn}
In this terminology, $\square^1$-domain is the same thing as Riemannian band. 

We write the codimension $1$ faces of $V$ as $\partial_{j,\pm} V := f^{-1}(p_j^{-1}(\{ \pm 1 \} ))$. 
Note that the codimension $m$ corner 
\[ V_\pitchfork := \partial_{1,+}V \cap \cdots \cap \partial _{m,+}V \]
is a closed manifold.
\begin{defn}
We define the multiwidth of a $\square^{m}$-domain as
\[ \mathrm{width} (V,g):= \min_{j=1,\dots ,m} \mathrm{dist}(\partial _{j,+} V, \partial _{j,-}V). \]
\end{defn}
For a class $\cV_m$ of Riemannian $\square^{m}$-domains, we define the $\cV_m$-multiwidth of a complete spin manifold $M$, denoted by $\mathrm{width}_{\cV_m} (M, g)$, as the supremum of the width of a Riemannian $\square^{n-m}$-domains immersed to $M$.  

\begin{defn}\label{defn:squareKO}
We say that a $\square^{m}$-domain $V$ is in the class $\mathcal{KO}_m$ if the equivariant coarse index 
\[ \Ind_{\pi_1(V)}(\slashed{D}_{\widetilde{V}_\pitchfork}) \in \KO_{d-m}(C^*(\widetilde{V}_{\pitchfork})^{\pi_1(V)})\] 
does not vanish. 
\end{defn}

\begin{rmk}
We compare our assumption on cube-like domains with the previous works \cites{gromovFourLecturesScalar2019,wangProofGromovCube2021,xieQuantitativeRelativeIndex2021}. 
First, the above papers deal with a manifold with boundary $X$, instead of a manifold with corner, equipped with a map $f \colon X \to [-1,1]^m$ sending $\partial X$ to the boundary of the cube. 
For such $X$, the inverse image $X':=f^{-1}([-1+\varepsilon, 1-\varepsilon]^m)$ is a $\square^m$-domain in the sense of \cref{defn:square} if $\varepsilon >0$ is chosen to be a regular value of $p_j \circ f$ for $j=1,\dots, m$ (where $p_j$ denotes the $j$-th projection). 
We may choose such $\varepsilon >0$ in the way that the distance of $\partial X$ and $\partial X'$ is arbitrarily small.

Next, the submanifold playing the role of $\partial_+ V$ in the band width theory in the previous papers is not the lowest dimensional corner $V_\pitchfork$, but the transverse intersection $Y_\pitchfork$ of $m$ hypersurfaces $Y_j \subset V$ which separates $\partial_{j,+}V$ and $\partial _{j,-}V$.
In particular, the assumption of \cites{wangProofGromovCube2021,xieQuantitativeRelativeIndex2021} is the non-vanishing of the Rosenberg index of $Y_\pitchfork$ under the condition that $\pi_1(Y_\pitchfork) \to \pi_1(V)$ is injective. 
Indeed, this assumption is reduced to the non-vanishing of the $\pi_1(V)$-equivariant coarse index of the $\pi_1(V)$-Galois covering $\widetilde{Y}_\pitchfork$ (instead of $\pi_1(Y_\pitchfork)$-equivariant coarse index of the universal covering of $Y_\pitchfork$). This reduction is discussed in \cite{xieQuantitativeRelativeIndex2021}, in the second paragraph of the proof of Theorem 4.3. 
Since $Y_\pitchfork$ and $V_\pitchfork$ are cobordant, their equivariant coarse index coincides. 
\end{rmk}

Let $M$ be a closed spin manifold. Assume that $M$ has infinite $\mathcal{KO}_m$-multiwidth. For $n \in \bN$, pick a Riemannian $\square^m$-domain $V_n$ in the class $\mathcal{KO}_m$ which has the multiwidth $>n$ and is isometrically immersed to $M$. 
We use the letters $\pi_n$, $\Gamma_n$, $\mathbf{V}$, and $\widetilde{\mathbf{V}}$ in the same way as \cref{section:main}. 
Moreover, we define the maximal Roe algebras $C^*(\widetilde{\mathbf{V}})^{\boldsymbol{\pi}}$ and $C^*(\partial  \widetilde{\mathbf{V}} \subset \widetilde{\mathbf{V}})^{\boldsymbol{\pi}}$ in the same way. 
Note that, the same proof as \cref{lem:systole_propagation} shows that, the image of $\widetilde{V}_{n,R}$ in $\widetilde{M}$ has systole not less than $2C_R$. 
This means that the same construction as \cref{lem:lift} and \cref{lem:liftD} works, and hence we get the $\ast$-homomorphisms
\begin{align*}
    s &\colon C^*(\widetilde{M})^\Gamma \to 
C^*(\widetilde{\mathbf{V}})^{\boldsymbol{\pi}}/ C^*(\partial \widetilde{\mathbf{V}} \subset \widetilde{\mathbf{V}})^{\boldsymbol{\pi}}, \\
s &\colon D^*(\widetilde{M})^\Gamma \to 
D^*(\widetilde{\mathbf{V}})^{\boldsymbol{\pi}}/ C^*(\partial \widetilde{\mathbf{V}} \subset \widetilde{\mathbf{V}})^{\boldsymbol{\pi}},\\
s &\colon Q^*(\widetilde{M})^\Gamma \to 
Q^*(\widetilde{\mathbf{V}})^{\boldsymbol{\pi}}/ C^*(\partial \widetilde{\mathbf{V}} \subset \widetilde{\mathbf{V}})^{\boldsymbol{\pi}}.
\end{align*} 

Let $W_n:=\partial_{1,+}V$ and let $\widetilde{W}_n$ denote the $\pi_n$-Galois covering of $W_n$ (note that it is not necessarily the universal covering of $W_n$). 
Set $\mathbf{W}=\bigsqcup_{n \in \bN} W_n $ and $\widetilde{\mathbf{W}}=\bigsqcup_{n \in \bN} \widetilde{W}_n $. 
We remark that $W_n$ is a Riemannian $\square^{m-1}$-domain. 

Let $\slashed{D}_{\widetilde{\mathbf{V}}}$ and $\slashed{D}_{\widetilde{\mathbf{W}}}$ denote the Dirac operator on $\widetilde{\mathbf{V}}$ and $\widetilde{\mathbf{W}}$ respectively. 
In the same way as the previous section, these operators determine the $\KO$-classes $[\slashed{D}_{\widetilde{\mathbf{V}}}] \in \KO_{d+1}(Q^*(\widetilde{\mathbf{V}})^{\boldsymbol{\pi}}/Q^*(\partial \widetilde{\mathbf{V}} \subset \widetilde{\mathbf{V}})^{\boldsymbol{\pi}})$ and 
$[\slashed{D}_{\widetilde{\mathbf{W}}}] \in \KO_{d}(Q^*(\widetilde{\mathbf{W}})^{\boldsymbol{\pi}}/Q^*(\partial \widetilde{\mathbf{W}} \subset \widetilde{\mathbf{W}})^{\boldsymbol{\pi}})$ respectively.
\begin{lem}\label{lem:boundary}
There is a homomorphism
\begin{align*}
\partial \colon \KO_d \bigg( \frac{C^*(\widetilde{\mathbf{V}})^{\boldsymbol{\pi}}}{C^*(\partial \widetilde{\mathbf{V}} \subset \widetilde{\mathbf{V}})^{\boldsymbol{\pi}}} \bigg) \to \KO_{d-1} \bigg( \frac{C^*(\widetilde{\mathbf{W}})^{\boldsymbol{\pi}}}{C^*(\partial \widetilde{\mathbf{W}} \subset \widetilde{\mathbf{W}})^{\boldsymbol{\pi}}} \bigg)
\end{align*}
sending $\Ind_{\boldsymbol{\pi}}([\mathbf{\slashed{D}}_{\widetilde{\mathbf{V}}}])$ to $\Ind_{\boldsymbol{\pi}}([\mathbf{\slashed{D}}_{\widetilde{\mathbf{W}}}])$.
\end{lem}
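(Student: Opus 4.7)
The plan is to realize $\partial$ as the $\KO$-theory boundary map of a short exact sequence whose ideal is identified, via Morita equivalence, with $C^*(\widetilde{\mathbf{W}})^{\boldsymbol{\pi}}/C^*(\partial \widetilde{\mathbf{W}} \subset \widetilde{\mathbf{W}})^{\boldsymbol{\pi}}$; the compatibility with the Dirac class will then follow from the ``boundary of Dirac is Dirac'' principle applied componentwise at the face $\partial_{1,+}\widetilde{V}_n \subset \widetilde{V}_n$.

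First I would introduce, by the same recipe as $C^*(\partial \widetilde{\mathbf{V}} \subset \widetilde{\mathbf{V}})^{\boldsymbol{\pi}}$ in \cref{section:main}, the Real C*-ideals $C^*(\partial_{1,+}\widetilde{\mathbf{V}} \subset \widetilde{\mathbf{V}})^{\boldsymbol{\pi}}$ and $C^*(\partial'\widetilde{\mathbf{V}} \subset \widetilde{\mathbf{V}})^{\boldsymbol{\pi}}$, associated respectively to $\partial_{1,+}\widetilde{V}_n$ and to the union $\partial \widetilde{V}_n \setminus \partial_{1,+}\widetilde{V}_n$ of the remaining boundary faces. Because any finite-propagation operator supported near $\partial \widetilde{V}_n$ may be cut off as a sum of operators near $\partial_{1,+}\widetilde{V}_n$ and near $\partial'\widetilde{V}_n$ respectively, these two ideals span $C^*(\partial \widetilde{\mathbf{V}} \subset \widetilde{\mathbf{V}})^{\boldsymbol{\pi}}$ and yield the short exact sequence
\[
0 \to \frac{C^*(\partial_{1,+}\widetilde{\mathbf{V}} \subset \widetilde{\mathbf{V}})^{\boldsymbol{\pi}}}{C^*(\partial_{1,+}\widetilde{\mathbf{V}} \subset \widetilde{\mathbf{V}})^{\boldsymbol{\pi}} \cap C^*(\partial'\widetilde{\mathbf{V}} \subset \widetilde{\mathbf{V}})^{\boldsymbol{\pi}}} \to \frac{C^*(\widetilde{\mathbf{V}})^{\boldsymbol{\pi}}}{C^*(\partial'\widetilde{\mathbf{V}} \subset \widetilde{\mathbf{V}})^{\boldsymbol{\pi}}} \to \frac{C^*(\widetilde{\mathbf{V}})^{\boldsymbol{\pi}}}{C^*(\partial \widetilde{\mathbf{V}} \subset \widetilde{\mathbf{V}})^{\boldsymbol{\pi}}} \to 0.
\]

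Next I would identify the left-hand term with $C^*(\widetilde{\mathbf{W}})^{\boldsymbol{\pi}}/C^*(\partial \widetilde{\mathbf{W}} \subset \widetilde{\mathbf{W}})^{\boldsymbol{\pi}}$. Collar coordinates supply a $\pi_n$-equivariant coarse equivalence of a neighborhood of $\partial_{1,+}\widetilde{V}_n$ in $\widetilde{V}_n$ with $\widetilde{W}_n$; and since $\partial_{1,+} V_n$ is itself a $\square^{m-1}$-domain whose boundary is $\partial_{1,+}V_n \cap \bigl(\partial V_n \setminus \partial_{1,+}V_n\bigr)$, the intersection $C^*(\partial_{1,+}\widetilde{\mathbf{V}} \subset \widetilde{\mathbf{V}})^{\boldsymbol{\pi}} \cap C^*(\partial'\widetilde{\mathbf{V}} \subset \widetilde{\mathbf{V}})^{\boldsymbol{\pi}}$ corresponds precisely to $C^*(\partial \widetilde{\mathbf{W}} \subset \widetilde{\mathbf{W}})^{\boldsymbol{\pi}}$. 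Composing the $\KO$-theory boundary map of the displayed extension with the inverse of this Morita equivalence defines $\partial$.

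Finally, evaluating $\partial$ on the Dirac class reduces, by naturality and the product structure, to a componentwise application of the boundary-of-Dirac-is-Dirac principle (e.g.~\cite{higsonAnalyticHomology2000}*{Proposition 11.2.15}): for each $n$, the K-theory boundary map sends $[\slashed{D}_{\widetilde{V}_n}]$, viewed modulo the ideal of operators near $\partial'\widetilde{V}_n$, to $[\slashed{D}_{\widetilde{W}_n}]$ modulo the ideal of operators near $\partial \widetilde{W}_n$, with the expected degree shift coming from the matching of Clifford structures across the collar. Assembling over $n$ gives the required identity. The main obstacle I foresee is the bookkeeping of the Morita equivalence at the corners where $\partial_{1,+}V_n$ meets the other codimension-one faces; but the $\square^{m-1}$-structure on $W_n$ is tailor-made to record exactly these intersections as $\partial W_n$, so this should reduce to an ideal-theoretic argument of the type carried out in \cref{lem:K}.
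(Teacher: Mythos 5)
Your proposal is correct and is essentially the paper's own argument: your extension with ideal $C^*(\partial'\widetilde{\mathbf{V}} \subset \widetilde{\mathbf{V}})^{\boldsymbol{\pi}}$ is, via the Noether isomorphism $I_1/(I_1\cap I_2)\cong (I_1+I_2)/I_2$, exactly the paper's extension built from $\widetilde{\mathbf{Z}}=\overline{\partial\widetilde{\mathbf{V}}\setminus\widetilde{\mathbf{W}}}$, with the same identification of the subquotient with $C^*(\widetilde{\mathbf{W}})^{\boldsymbol{\pi}}/C^*(\partial\widetilde{\mathbf{W}}\subset\widetilde{\mathbf{W}})^{\boldsymbol{\pi}}$ using $\widetilde{\mathbf{Z}}\cap\widetilde{\mathbf{W}}=\partial\widetilde{\mathbf{W}}$. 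The evaluation on the Dirac class likewise matches the paper: boundary-of-Dirac-is-Dirac together with the commutation of the $C^*$- and $Q^*$-level boundary maps with $\Ind_{\boldsymbol{\pi}}$.
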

\begin{proof}
Let $\widetilde{\mathbf{Z}}$ denote the closure of $\partial \widetilde{\mathbf{V}} \setminus \widetilde{\mathbf{W}}$. Note that $\widetilde{\mathbf{Z}} \cap \widetilde{\mathbf{W}} = \partial \widetilde{\mathbf{W}}$, and hence 
\[ C^*(\partial \widetilde{\mathbf{V}})^{\boldsymbol{\pi}}/C^*(\widetilde{ \mathbf{Z}} \subset \partial \widetilde{\mathbf{V}} )^{\boldsymbol{\pi}} \cong C^*(\widetilde{\mathbf{W}})^{\boldsymbol{\pi}}/C^*(\partial \widetilde{\mathbf{W}} \subset \widetilde{\mathbf{W}})^{\boldsymbol{\pi}}. \]
The Real C*-algebra extension
\[0 \to \frac{C^*(\partial \widetilde{\mathbf{V}})^{\boldsymbol{\pi}}}{C^*(\widetilde{ \mathbf{Z}} \subset \partial \widetilde{\mathbf{V}} )^{\boldsymbol{\pi}}} \to \frac{C^*(\widetilde{\mathbf{V}})^{\boldsymbol{\pi}}}{C^*(\widetilde{\mathbf{Z}}  \subset \widetilde{\mathbf{V}})^{\boldsymbol{\pi}}} \to \frac{C^*(\widetilde{\mathbf{V}})^{\boldsymbol{\pi}}}{C^*(\partial \widetilde{\mathbf{V}} \subset \widetilde{\mathbf{V}})^{\boldsymbol{\pi}}} \to 0 \]
induces the boundary map
\[\partial \colon \KO_d \Big(\frac{C^*(\widetilde{\mathbf{V}})^{\boldsymbol{\pi}}}{C^*(\partial \widetilde{\mathbf{V}} \subset \widetilde{\mathbf{V}})^{\boldsymbol{\pi}}} \Big) \to \KO_{d-1} \Big(\frac{C^*(\partial \widetilde{\mathbf{V}})}{C^*(\partial \widetilde{ \mathbf{Z}} \subset \partial \widetilde{\mathbf{V}} )}  \Big) \cong \KO_{d-1} \Big( \frac{C^*(\widetilde{\mathbf{W}})^{\boldsymbol{\pi}}}{C^*(\partial \widetilde{\mathbf{W}} \subset \widetilde{\mathbf{W}})^{\boldsymbol{\pi}}} \Big) \]
as desired. Moreover, the boundary of the same kind are also defined for $D^*$ and $Q^*$ coarse C*-algebras in the same way. Now, the commutativity of the diagram
\[
\xymatrix{
\KO_{d+1}\Big( \frac{Q^*(\widetilde{\mathbf{V}})^{\boldsymbol{\pi}}}{Q^*(\partial \widetilde{\mathbf{V}} \subset \widetilde{\mathbf{V}})^{\boldsymbol{\pi}}} \Big) \ar[r]^\partial  \ar[d]^{\Ind _{\boldsymbol{\pi}}} & 
\KO_d \Big( \frac{Q^*(\widetilde{\mathbf{V}})^{\boldsymbol{\pi}}}{Q^*(\partial \widetilde{\mathbf{V}} \subset \widetilde{\mathbf{V}})^{\boldsymbol{\pi}}} \Big) \ar[d]^{\Ind _{\boldsymbol{\pi}}}
\\
\KO_d \Big( \frac{C^*(\widetilde{\mathbf{V}})^{\boldsymbol{\pi}}}{C^*(\partial \widetilde{\mathbf{V}} \subset \widetilde{\mathbf{V}})^{\boldsymbol{\pi}}} \Big) \ar[r]^\partial & 
\KO_{d-1}\Big( \frac{C^*(\widetilde{\mathbf{V}})^{\boldsymbol{\pi}}}{C^*(\partial \widetilde{\mathbf{V}} \subset \widetilde{\mathbf{V}})^{\boldsymbol{\pi}}} \Big)
}
\]
and the boundary of Dirac is Dirac principle, $\partial [\slashed{D}_{\mathbf{V}}]=[\slashed{D}_{\mathbf{W}}]$, shows that $\partial (\Ind_{\boldsymbol{\pi}}([\mathbf{\slashed{D}}_{\widetilde{\mathbf{V}}}])) = \Ind_{\boldsymbol{\pi}}([\mathbf{\slashed{D}}_{\widetilde{\mathbf{W}}}])$ as desired. 
\end{proof}

\begin{proof}[Proof of \cref{thm:main2}]
For $1 \leq k\leq m$, let $W_n^k$ denote the face $\partial_{1,+}V \cap \cdots \partial _{k,+}V$, which is a $\square^{m-k}$-domain. 
Let us consider the $m-1$ iterated composition of the map defined in \cref{lem:boundary} as
\[
\mathclap{
\KO_d \Big(\frac{C^*(\widetilde{\mathbf{V}})^{\boldsymbol{\pi}}}{C^*(\partial \widetilde{\mathbf{V}} \subset \widetilde{\mathbf{V}})^{\boldsymbol{\pi}}} \Big) \xrightarrow{\partial} 
\KO_{d-1} \Big(\frac{C^*(\widetilde{\mathbf{W}^1})^{\boldsymbol{\pi}}}{C^*(\partial \widetilde{\mathbf{W}}^1 \subset \widetilde{\mathbf{W}}^1)^{\boldsymbol{\pi}}} \Big) \xrightarrow{\partial} 
\cdots \xrightarrow{\partial}
\KO_{d-m+1} \Big(\frac{C^*(\widetilde{\mathbf{W}}^{m-1})^{\boldsymbol{\pi}}}{C^*(\partial \widetilde{\mathbf{W}}^{m-1} \subset \widetilde{\mathbf{W}}^{m-1})^{\boldsymbol{\pi}}} \Big)_{\textstyle .}}
\]
We further compose the boundary map \eqref{eq:boundary_flat} and $\phi_+$. Finally we get
\[\phi_+ \circ \partial ^{m} \colon \KO_d \Big(\frac{C^*(\widetilde{\mathbf{V}})^{\boldsymbol{\pi}}}{C^*(\partial \widetilde{\mathbf{V}} \subset \widetilde{\mathbf{V}})^{\boldsymbol{\pi}}} \Big) \to \frac{\prod \KO_{d-m} (C^*(\widetilde{V}_{n,\pitchfork})^{\pi_n})}{ \bigoplus \KO_{d-m}(C^*(\widetilde{V}_{n,\pitchfork})^{\pi_n})} _{\textstyle .}\]
By \cref{lem:boundary} and the proof of \cref{thm:main}, this map sends the  equivariant coarse index $\Ind_{\boldsymbol{\pi}}(\slashed{D}_{\widetilde{\mathbf{V}}})$ to $(\Ind _{\pi_n}(\slashed{D}_{\widetilde{V}_{n,\pitchfork}}))_{n}^\flat$, which does not vanish by assumption. This finishes the proof. 
\end{proof}

\bibliographystyle{alpha}
\bibliography{ref.bib}

\end{document}